\documentclass[a4paper,11pt]{amsart}

\usepackage{ulem}
\usepackage{latexsym}
\usepackage{amssymb}
\usepackage{graphicx}
\usepackage{wrapfig}
\usepackage{amsthm}
\usepackage{float}
\usepackage{epsfig}
\usepackage{multirow}
\usepackage{caption}
\usepackage[toc,page]{appendix}
\usepackage{hyperref}
\RequirePackage{color}
\usepackage{amscd,enumerate}
\usepackage{amsfonts}
\usepackage{MnSymbol}
\usepackage{verbatim}
\usepackage[english]{babel}
\input xy
\xyoption{all}
\usepackage{mathrsfs}

\usepackage{multicol}
\usepackage{eucal}

\hypersetup{ colorlinks,
linkcolor=blue,
filecolor=green,
urlcolor=blue,
citecolor=blue }

\input xygraph

\usepackage{calrsfs}
\DeclareMathAlphabet{\pazocal}{OMS}{zplm}{m}{n}

\addtolength{\textwidth}{4cm} \addtolength{\oddsidemargin}{-2cm} \addtolength{\evensidemargin}{-2cm}
\textheight=22.15truecm

\setcounter{secnumdepth}{4}




\newcommand{\s}{\sigma}
\def\esc#1{\langle #1\rangle}
\def\2{{\bf 2}}
\def\1{{\bf 1}}
\def\remove#1{}



\newcommand{\K}{\mathbb{K}}

\newcommand{\N}{\mathbb{N}}



\newcommand{\spa}{{\rm span}}

\newcommand{\ann}{{\rm ann}}

\newcommand{\car}{{\rm char}}

\newcommand{\ran}{{\rm rank}}
\newcommand{\ex}{{\rm Ex}}

\newcommand{\A}{\mathsf{A}}

\newcommand{\B}{\mathcal{B}^{(n\times n)}}

\newtheorem{lemma}{Lemma}[section]
\newtheorem{corollary}[lemma]{Corollary}
\newtheorem{theorem}[lemma]{Theorem}
\newtheorem{proposition}[lemma]{Proposition}
\newtheorem{remark}[lemma]{Remark}
\newtheorem{definition}[lemma]{Definition}

\newtheorem{example}[lemma]{Example}

\definecolor{turquoise2}{rgb}{0,0.898039,0.933333}
\definecolor{magenta}{rgb}{1,0,1}
\definecolor{olivedrab}{rgb}{0.419608,0.556863,0.137255}
\definecolor{purple2}{rgb}{0.568627,0.172549,0.933333}
\definecolor{amethyst}{rgb}{0.6, 0.4, 0.8}
\definecolor{ao(english)}{rgb}{0.0, 0.5, 0.0}
\definecolor{atomictangerine}{rgb}{1.0, 0.6, 0.4}
\definecolor{amber(sae/ece)}{rgb}{1.0, 0.49, 0.0}
\definecolor{alizarin}{rgb}{0.82, 0.1, 0.26}
\definecolor{auburn}{rgb}{0.43, 0.21, 0.1}
\definecolor{aqua}{rgb}{0.0, 1.0, 1.0}

%
\makeatletter
\@namedef{subjclassname@2020}{%
  \textup{2020} Mathematics Subject Classification}
\makeatother


\subjclass[2020]{17A60, 17D92, 15A72, 15A63.}

\keywords{Evolution algebra, tensor product.}
\begin{document}
%
\title{Tensor product of evolution algebras}

\author[Y. Cabrera ]{Yolanda Cabrera Casado}
\address{Departamento de Matem\'atica Aplicada, E.T.S. Ingenier\'\i a Inform\'atica, Universidad de M\'alaga, Campus de Teatinos s/n. 29071 M\'alaga.   Spain. }
\email{yolandacc@uma.es}

\author[D. Mart\'{\i}n]{Dolores Mart\'in Barquero}
\address{Departamento de Matem\'atica Aplicada. Escuela de Ingenier\'\i as Industriales. Universidad de M\'alaga, Campus de Teatinos. 29071 M\'alaga,   Spain.}
\email{dmartin@uma.es}

\author[C. Mart\'{\i}n]{C\'andido Mart\'in Gonz\'alez}
\address{Departamento de \'{A}lgebra, Geometr\'{i}a y Topolog\'{\i}a, Universidad de M\'{a}laga, Spain}
\email{candido\_m@uma.es}

\author[A. Tocino]{Alicia Tocino Sánchez}
\address{Departamento de \'{A}lgebra, Geometr\'{i}a y Topolog\'{\i}a, Universidad de M\'{a}laga, Spain}
\email{alicia.tocino@uma.es}

\thanks{ The  authors are supported by the Spanish Ministerio de Ciencia e Innovaci\'on through project  PID2019-104236GB-I00 and  by the Junta de Andaluc\'{\i}a  through projects  FQM-336 and UMA18-FEDERJA-119,  all of them with FEDER funds. 
}

\begin{abstract}
The starting point of this work is that the class of evolution algebras over a fixed field is closed under tensor product.
This arises questions about the inheritance of properties from the tensor product to the factors and conversely. 
For instance nondegeneracy, irreducibility, perfectness and simplicity are investigated. The four-dimensional case is illustrative and useful to contrast conjectures so we achieve a complete classification of four-dimensional perfect evolution algebras arising as tensor product of two-dimensional ones. We find that there are $4$-dimensional evolution algebras which are the tensor product of two nonevolution algebras. 

\end{abstract}

\maketitle

\section{Introduction} \label{sec01}
The necessity to address the problem of the genetic inheritance in Biology from a mathematical point of view has made it essential to introduce tools of abstract algebra.
The results of the first studies in genetic from an algebraic point of view of the genetic inheritance behavior, due to I.M.H. Etherington, can be found in  \cite{Et9}, \cite{Et10} and \cite{Et11}. We can locate a background of algebras in genetics in \cite{Wo} and \cite{Reed}. In 2006 arose researches, by J.P. Tian and  P. Vojtechovsky, concerning non-Mendelian genetic inheritance in which it was necessary to introduce a new type of algebra emerging in the study of genetics called evolution algebras (see \cite{Tian} and \cite{tian}). From this last paper, a  wide amount of works have arisen about evolution algebras, see for example \cite{ELduqueGraphs}, \cite{Casas},  \cite{CSV} and \cite{YolandaTesis}. The paper is organized as follows. After a section of preliminary results we study, in the third and fourth sections, the tensor product of algebras and the associated matrices. Perfectness and nondegenerancy are inherited from the tensor product to the factors and conversely (in arbitrary dimension) in Proposition \ref{perfect} and Proposition \ref{rem:zuru}. We give an example of anticommutative algebras (not evolution algebras) whose tensor product is an evolution algebra. However if the ground field is algebraically closed, any four-dimensional tensorially decomposable simple evolution algebra is the tensor product of (simple) evolution algebras (Proposition \ref{simplevo}). It is also related the annihilator of the tensor product with the annihilator of the factors (Lemma \ref{ann}). In section 5 we use graphs theory and the fact to associate a graph to an evolution algebra described in \cite{ELduqueGraphs}.
We study irreducibility in the context of tensor products (Corollary \ref{cor:strong}). In section 6 we prove that when one of the factors is perfect and has an ideal of codimension $1$ then if the tensor product is an evolution algebra, the other factor is an evolution algebra (Proposition \ref{proposition:uf}). We compute the number of zeros $z$ (and of zeros in the diagonal $z_d$) of the Kronecker product in terms of the corresponding numbers $z$ and $z_d$ of the factors. This allows to screen the $4\times 4$ matrices which arise as Kronecker product of $2\times 2$ matrices. In this section we classify four-dimensional tensorially decomposable perfect evolution algebras into $13$ classes and determine some sets of complete invariants, some of them based on characteristic and minimal polynomials (Theorem \ref{clasifica}). Finally we describe an example of classification task for this class of algebras.

%


\section{Preliminaries} \label{sec02}

We start recalling several definitions concerning evolution algebras from \cite{YolandaTesis}.
An \textit{evolution algebra} over a field $\K$ is a $\K$-algebra $A$ provided with a basis $B = \{e_i\}_{i \in \Lambda}$ such that $e_i e_j = 0$ whenever $i\neq j$. Such a basis $B$ is called a \textit{natural basis}.
Let $A$ be an evolution algebra, and fix a natural basis $B$ in $A$. The scalars $\omega_{ki} \in\K$ such that
$e_i^2:=e_i e_i=\sum_{k\in\Lambda}\omega_{ki}e_k$ are called the \textit{structure constants} of $A$ relative to $B$, and the matrix $M_B := (\omega_{ki})$ is said to be the \textit{structure matrix} of $A$ relative to $B$. We recall that an algebra $A$ is said to be \textit{perfect} if $A^2=A$, equivalently, the determinant of the structure matrix is not zero. A \textit{reducible} evolution algebra is an evolution algebra $A$ which can be
decomposed as the direct sum of two non-zero
evolution algebras. An evolution algebra which is not reducible will be called \textit{irreducible}.
We say that an algebra $A$ is \textit{simple} if $A^2\neq 0$ and $0$ is the only proper ideal.  
An evolution algebra $A$ is \textit{nondegenerate} if it has a natural basis
$B = \{e_i\}_{i \in\Lambda}$ such that $e_i^2\neq 0$
for every $i\in\Lambda$ (as proved in \cite[Corollary 1.5.4]{YolandaTesis} the definition of nondegenerancy does not depend on the chosen natural basis).
We will say that an algebra $A$ is \textit{anticommutative} if and only if $xy=-yx$ for any $x,y\in A$.\par
We will say that an algebra $A$ is a {\it zero-square algebra} if $x^2=0$ for any $x\in A$. Observe that a zero-square algebra is necessarily anticommutative. If the characteristic of the ground field $\K$ is other than $2$, then the $\K$-algebra $A$ is anticommutative if and only if $A$ is a zero-square algebra.
We denote by $\N^*$ the natural numbers except $0$.\par

Now, we recall the definition of tensor product of two vector spaces.
The \textit{tensor product} of two vector spaces $V$ and $W$ over a field $\K$ is a vector space defined as the set $$V\otimes W=\left\{\displaystyle\sum_{i=1}^{n}\lambda_{i}v_{i}\otimes w_{i} \,\colon \,\lambda_{i} \in \K, \, v_{i}\in V, \, w_{i}\in W,\, n\in\N^*\right\}$$ 

\noindent
where $v\otimes w $ is a bilinear map
\begin{align*}
  v\otimes w  \colon  V^\ast\times W^\ast &\to \K \\
  (f,g) &\mapsto f(v)g(w)
\end{align*}

$\forall v\in V$, $\forall w\in W$ and $V^\ast$ and $W^\ast$ denote the dual vector space of $V$ and $W$ respectively. 
Moreover, if $\{e_{i}\}_{i\in \Lambda}$ is a basis of $V$ and $\{f_{j}\}_{j\in \Gamma}$ is a basis of $W$, then $\{e_{i}\otimes f_{j}\}_{i\in\Lambda, j\in\Gamma}$ is a \textit{basis} of the tensor product $V \otimes W$.

Finally we add here a comment on ideals of $4$-dimensional algebras: If $A$ is a $\K$-algebra of $\dim(A)=4$ and $I,J\triangleleft A$ with $\dim(I)=2$ and $\dim(J)=3$ then either $I\subset J$ or $I+J=A$. Indeed $I\cap J\ne 0$ (otherwise $\dim(I+J)=5$ which is not possible). Thus $\dim(I\cap J)=1,2$ or $3$. In case $\dim(I\cap J)=3$ we have $\dim(I+J)=2$ and since $J\subset I+J$, this is not possible.
If $\dim(I\cap J)=2$ then $\dim(I+J)=3$ whence $J=I+J$ implying
$I\subset J$. Finally if
$\dim(I\cap J)=1$ we have $\dim(I+J)=4$ so that $I+J=A$.

\section{Generalities of tensor product of algebras}

In this section we will consider the tensor product of algebras over fields. As it is well known this product is commutative and associative (up to natural isomorphism). We will define tensorially decomposable algebras and investigate the way in which certain properties can be transferred from the factors to the tensor product and/or viceversa.

\begin{definition}\label{def:product}\rm
 Let $A_1$ and $A_2$ be two $\K-$algebras with bases $\{e_{i}\}_{i\in \Lambda}$ and $\{f_{j}\}_{j\in \Gamma}$ respectively. We define a product on $A_1\otimes A_2$ as follows:
 $$(e_{i}\otimes f_{j})\cdot(e_{k}\otimes f_{r})= e_{i}e_{k}\otimes f_{j}f_{r}.$$
\end{definition}

\begin{remark}\rm  If $A_1$ and $A_2$ are both commutative algebras or anticommutative algebras, then $A_1\otimes A_2$ is commutative. Since
zero-square algebras are anticommutative we have: if both algebras $A_1$ and $A_2$ are commutative or both zero-square algebras, then their tensor product is a commutative algebra.
\end{remark}

\begin{definition}\rm
 We say that a $\K$-algebra $A$ is \textit{tensorially decomposable} if it is isomorphic to $A_1\otimes A_2$ where $A_1$ and $A_2$ are $\K$-algebras with $\dim(A_1),\dim(A_2)> 1$. Otherwise we say that $A$ is \textit{tensorially indecomposable}.
\end{definition}

We use the terms tensorially decomposable and tensorially indecomposable also for matrices when they come or not from the Kronecker product of two matrices.

\begin{proposition}\label{laqeme}
Let $A_1$ and $A_2$ be $\K$-algebras such that $A_1\otimes A_2$ is commutative and $(A_1\otimes A_2)^2\ne 0$, then either $A_1$ and $A_2$ are commutative or
both are zero-square algebras.
\end{proposition}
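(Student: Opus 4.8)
The plan is to translate commutativity of $A_1\otimes A_2$ into a relation between the factors and then exploit the elementary fact that, in a tensor product $V\otimes W$, a pure tensor $v\otimes w$ vanishes only when $v=0$ or $w=0$.

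First I would write the commutativity condition on arbitrary elements: for $a,a'\in A_1$ and $b,b'\in A_2$, commutativity of $A_1\otimes A_2$ together with Definition \ref{def:product} gives $aa'\otimes bb'=a'a\otimes b'b$. Specialising $b=b'$ yields $(aa'-a'a)\otimes b^2=0$ for all $a,a'\in A_1$ and all $b\in A_2$; hence, for each fixed $b\in A_2$, either $aa'-a'a=0$ for every $a,a'$ (that is, $A_1$ is commutative) or $b^2=0$. Symmetrically, for each $a\in A_1$, either $A_2$ is commutative or $a^2=0$. I would also record the elementary observation that $(A_1\otimes A_2)^2\ne 0$ forces $A_1^2\ne 0$ and $A_2^2\ne 0$, since $(A_1\otimes A_2)^2$ is spanned by the tensors $aa'\otimes bb'$.

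Then I would argue by cases. If both $A_1$ and $A_2$ are commutative we are in the first alternative. Otherwise, say $A_1$ is not commutative; then by the first dichotomy $b^2=0$ for every $b\in A_2$, so $A_2$ is a zero-square algebra, and it only remains to see that $A_1$ is zero-square as well. By the symmetric dichotomy, either $A_2$ is commutative or $a^2=0$ for all $a\in A_1$; in the latter case $A_1$ is zero-square and we are done. So assume $A_2$ is commutative. Since $A_2^2\ne 0$, pick $b,b'\in A_2$ with $bb'\ne 0$; then, using $bb'=b'b$, the identity $aa'\otimes bb'=a'a\otimes b'b$ becomes $(aa'-a'a)\otimes bb'=0$ with $bb'\ne 0$, forcing $A_1$ commutative, a contradiction. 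Hence $A_1$ is zero-square, and both factors are zero-square. The case where $A_2$ is not commutative is handled by exchanging the roles of $A_1$ and $A_2$.

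The routine part is unwinding Definition \ref{def:product}; the one point that needs care is the mixed situation in which a factor turns out to be simultaneously commutative and of zero square (which genuinely occurs only in characteristic $2$, where anticommutativity and commutativity coincide). The argument above shows this possibility is incompatible with $(A_1\otimes A_2)^2\ne 0$, so it is excluded; I expect this characteristic-free bookkeeping to be the main (minor) obstacle, everything else being immediate from the vanishing criterion for pure tensors.
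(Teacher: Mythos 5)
Your proof is correct and takes essentially the same route as the paper's: both rest on the two identities $a^2\otimes(xy-yx)=0$ and $(a_1a_2-a_2a_1)\otimes b_1b_2=0$ combined with the vanishing criterion for pure tensors and the observation that $(A_1\otimes A_2)^2\ne 0$ forces $A_2^2\ne 0$. You merely organize the dichotomy contrapositively (starting from a non-commutative factor rather than a non-zero-square one), which changes nothing of substance.
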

\begin{proof}
Assume that $A_1$ or $A_2$ is not a zero-square algebra. Without lost of generality we may
assume that $A_1$ is not zero-square and take $a\in A_1$ such
that $a^2\ne 0$. Then for any $x,y\in A_2$ we have 
$a^2\otimes xy=(a\otimes x)(a\otimes y)=(a\otimes y)(a\otimes x)=a^2\otimes yx$ whence $a^2\otimes (xy-yx)=0$ which implies $xy=yx$. So $A_2$ is commutative.
We know that $A_2^2\ne 0$ because $(A_1\otimes A_2)^2\ne 0$.
Take $b_1,b_2\in A_2$ such that $b_1b_2\ne 0$. Then for any
$a_1,a_2\in A_1$ we have 
$$a_1a_2\otimes b_1b_2=(a_1\otimes b_1)(a_2\otimes b_2)=
(a_2\otimes b_2)(a_1\otimes b_1)=a_2a_1\otimes b_2b_1$$
hence by commutativity of $A_2$, $a_1a_2=a_2a_1$ so that $A_1$ is also commutative. 
\end{proof}

It is possible to define a commutative $\K$-algebra structure by means of a collection of inner products defined on the underlying $\K$-vector space following \cite{CCMM}. Let $A$ be a commutative finite-dimensional $\K$-algebra with basis $\{e_i\}$. We consider the inner product  $\esc{\cdot,\cdot}_i\colon A \times A \to \K$ such that $xy=\sum_i\esc{x,y}_ie_i$ for any $x,y\in A$. Fixing the basis, these inner products determine precisely the algebra product and so they will be called \textit{structure inner products}.

\begin{lemma}
Let $A_1$ and $A_2$ be two commutative finite-dimension algebras with basis $\{e_i\}_{i=1}^n$ and $\{f_j\}_{j=1}^m$ respectively. We consider the structure inner products of $A_1$ and $A_2$ in the given basis. So, we have $x_1y_1=\sum_i \esc{ x_1, y_1}_i e_i  $ and $x_2y_2=\sum_j \esc{ x_2, y_2}'_j f_j  $ for every $x_1,y_1 \in A_1$ and $x_2,y_2 \in A_2$. If we take into account the product defined by Definition \ref{def:product}, let $M_s$ be the matrix of structure inner product $\esc{\cdot,\cdot}_s'$ relative to the basis $\{f_j\}_{j=1}^m$ for every $s$. Then the matrix of structure inner product $\esc{\cdot,\cdot}^{\otimes}_{r,s}$ relative to the basis $\{e_i \otimes f_j\}$ is of following one (except reordering):
$$M_{\esc{\cdot,\cdot}_{r,s}^\otimes}=\begin{pmatrix}
\esc{e_1,e_1}_r M_s &  \esc{e_1,e_2}_r M_s &\cdots & \esc{e_1,e_n}_r M_s \\
\esc{e_2,e_1}_r M_s &\esc{e_2,e_2}_r M_s &\cdots & \esc{e_2,e_n}_r M_s \\
\vdots  &\cdots& \\
\esc{e_n,e_1}_r M_s & \esc{e_n,e_2}_r M_s 
& \cdots & \esc{e_n,e_n}_r M_s 
\end{pmatrix}.$$
\end{lemma}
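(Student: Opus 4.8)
The plan is to evaluate the product of two arbitrary basis vectors of $A_1\otimes A_2$ and simply read off its components in the basis $\{e_i\otimes f_j\}$. First I would observe that, by the Remark, $A_1\otimes A_2$ is a commutative algebra, so it does possess structure inner products relative to $\{e_i\otimes f_j\}$; I index them by the pairs $(r,s)$, so that $xy=\sum_{r,s}\esc{x,y}^\otimes_{r,s}\,(e_r\otimes f_s)$ for all $x,y\in A_1\otimes A_2$. By bilinearity it is then enough to identify the value of each $\esc{\cdot,\cdot}^\otimes_{r,s}$ on a pair $(e_a\otimes f_b,\ e_c\otimes f_d)$ of basis vectors.

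Next I would carry out the (short) computation. Using Definition \ref{def:product} and the bilinearity of $\otimes$,
\begin{align*}
(e_a\otimes f_b)(e_c\otimes f_d)
&= e_ae_c\otimes f_bf_d
=\Big(\sum_r\esc{e_a,e_c}_r\,e_r\Big)\otimes\Big(\sum_s\esc{f_b,f_d}'_s\,f_s\Big)\\
&=\sum_{r,s}\esc{e_a,e_c}_r\,\esc{f_b,f_d}'_s\,(e_r\otimes f_s),
\end{align*}
and comparison with the expansion above gives $\esc{e_a\otimes f_b,\ e_c\otimes f_d}^\otimes_{r,s}=\esc{e_a,e_c}_r\,\esc{f_b,f_d}'_s$ for all indices $a,b,c,d,r,s$.

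Finally I would assemble the matrix. Fixing $r$ and $s$ and ordering the basis $\{e_i\otimes f_j\}$ lexicographically with the $\{e_i\}$-index running slowest, the $\big((a,b),(c,d)\big)$ entry of $M_{\esc{\cdot,\cdot}^\otimes_{r,s}}$ is $\esc{e_a,e_c}_r\,(M_s)_{bd}$; hence the matrix splits into $n\times n$ blocks whose $(a,c)$ block is the scalar multiple $\esc{e_a,e_c}_r\,M_s$, which is exactly the displayed matrix. (Equivalently, $M_{\esc{\cdot,\cdot}^\otimes_{r,s}}$ is the Kronecker product of the matrix of $\esc{\cdot,\cdot}_r$ in $\{e_i\}$ with $M_s$.) Choosing any other ordering of $\{e_i\otimes f_j\}$ conjugates this matrix by a permutation matrix, which is the reason for the proviso ``except reordering''.

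I do not anticipate a genuine obstacle: the argument is a direct unwinding of the definitions. The only two points deserving a word of care are the appeal to the Remark to guarantee that $A_1\otimes A_2$ is commutative (so that the structure inner products $\esc{\cdot,\cdot}^\otimes_{r,s}$ exist in the first place), and the purely notational translation between the pair-index $(a,b)$ of a tensor basis vector and its position inside the block decomposition — which is precisely where the clause ``up to reordering'' is needed.
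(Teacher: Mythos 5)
Your proof is correct and follows essentially the same route as the paper: expand the product of basis tensors via Definition \ref{def:product} and read off the coefficient of $e_r\otimes f_s$. In fact your version is slightly more complete, since the paper only computes the squares $(e_i\otimes f_j)^2$ (hence only the diagonal entries of the matrix directly), whereas you compute $(e_a\otimes f_b)(e_c\otimes f_d)$ for arbitrary pairs, which is what is actually needed to identify every entry $\esc{e_a,e_c}_r(M_s)_{bd}$ of the displayed block matrix.
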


\begin{proof}
If we reorder the basis $\{e_i \otimes f_j\}$ in the way $\{e_1\otimes f_1,\ldots e_1 \otimes f_m,\ldots, e_n\otimes f_1, \ldots,e_n \otimes f_m\}$ then $(e_i\otimes f_j)^2=e_i^2 \otimes f_j ^2 = \sum_k \sum _l \esc{e_i,e_i}_k \esc{f_j,f_j}'_l e_k \otimes f_l$. Then the projection over $e_r \otimes e_s$ is $\esc{e_i,e_i}_r \esc{f_j,f_j}'_s$. So, we obtain that the matrix of structure inner product $\esc{\cdot,\cdot}_s'$ relative to the basis $\{f_j\}_{j=1}^m$ is the desired one.
\end{proof}

Note that the matrices of the structure inner products of $A_1 \otimes A_2$ are obtained as the Kronecker product of each structure inner product matrix of $A_1$ by each  structure inner product matrix of $A_2$.

\begin{remark} \label{propi}\rm
Recall that for $V$ and $W$  vector spaces and  $\{w_i\,|\,i\in I\}$ a linearly independent set of $W$ then $\sum_i v_i\otimes w_i=0$ with $v_i\in V$ for all $i\in I$, implies $v_i=0$ for all $i\in I$. 
\end{remark}

\begin{lemma} \label{remark:tensorequal}

Let $A_1$ and $A_2$ be two $\K$-vector spaces of arbitrary dimension. If $U_1, V_1$ are subspaces of $A_1$ and  $U_2, V_2$ are subspaces of $A_2$ with $U_i \subset V_i$ ($i=1,2$), then $U_1\otimes U_2 = V_1 \otimes V_2$ if and only if $U_1=V_1$ and $U_2=V_2$. 
\end{lemma}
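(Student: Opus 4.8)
The \emph{if} direction is immediate from the definition of the product on tensor factors (or just from the fact that $U_i\subseteq V_i$ gives $U_1\otimes U_2\subseteq V_1\otimes V_2$ with equality when the factors coincide). So the task is the converse: $U_1\otimes U_2 = V_1\otimes V_2$ forces $U_1=V_1$ and $U_2=V_2$. Here one should keep in mind that the claim requires the subspaces to be nonzero, since otherwise it fails (for instance $U_1=V_1=U_2=0$ but $V_2\neq 0$ gives $U_1\otimes U_2=0=V_1\otimes V_2$). The plan is to use distributivity of the tensor product over direct sums.

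First I would fix vector space complements, writing $V_1 = U_1\oplus W_1$ and $V_2 = U_2\oplus W_2$ with $W_i$ a subspace of $V_i$. Distributivity of $\otimes$ over $\oplus$ then yields, inside $V_1\otimes V_2$, the direct sum decomposition
\[
V_1\otimes V_2 = (U_1\otimes U_2)\oplus(U_1\otimes W_2)\oplus(W_1\otimes U_2)\oplus(W_1\otimes W_2),
\]
where each summand is understood via the canonical injection of a tensor product of subspaces into $V_1\otimes V_2$ (this map is injective because we work over a field). Now the hypothesis $U_1\otimes U_2 = V_1\otimes V_2$ says precisely that the first summand is the whole space, so the other three summands vanish; in particular $U_1\otimes W_2 = 0$ and $W_1\otimes U_2 = 0$. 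Since a tensor product of two nonzero $\K$-vector spaces is nonzero (a basis vector of one tensored with a basis vector of the other belongs to the basis $\{e_i\otimes f_j\}$ of the tensor product recalled in the Preliminaries), and $U_1\neq 0$, $U_2\neq 0$, we get $W_2 = 0$ and $W_1 = 0$, i.e. $U_1 = V_1$ and $U_2 = V_2$.

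Alternatively, and avoiding the direct sum of four pieces, one can argue elementwise: given $v\in V_1$, write $v = u+w$ with $u\in U_1$, $w\in W_1$, pick $0\neq p\in U_2$, and observe $w\otimes p = v\otimes p - u\otimes p\in U_1\otimes U_2$ (the first term lies in $V_1\otimes V_2 = U_1\otimes U_2$, the second in $U_1\otimes U_2$ directly), while also $w\otimes p\in W_1\otimes V_2$; as $(U_1\otimes V_2)\cap(W_1\otimes V_2)=0$ inside $V_1\otimes V_2$, we conclude $w\otimes p = 0$, hence $w=0$ by Remark \ref{propi} applied to the linearly independent singleton $\{p\}$, so $v\in U_1$ and $V_1=U_1$; symmetrically $V_2=U_2$. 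Either route is routine; the only point demanding care is the bookkeeping that identifies the various tensor products of subspaces as subspaces of $V_1\otimes V_2$, together with recording the nondegeneracy hypothesis, so I do not anticipate a real obstacle.
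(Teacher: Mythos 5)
Your proof is correct. Your second, elementwise route is essentially the paper's own argument: the paper extends a basis $\{b_p\}_{p\in P}$ of $U_2$, expands $x\otimes b_k$ ($x\in V_1$, $k\in P$) in the basis of $U_1\otimes U_2$, and uses linear independence of the $b_p$ (Remark \ref{propi}) to conclude $x=z_{kk}\in U_1$ --- exactly your step ``$w\otimes p=0$ with $p\ne 0$ forces $w=0$,'' written with explicit coordinates instead of complements. Your first route, decomposing $V_1\otimes V_2=(U_1\otimes U_2)\oplus(U_1\otimes W_2)\oplus(W_1\otimes U_2)\oplus(W_1\otimes W_2)$ and killing the last three summands, is a more structural packaging of the same fact; it buys a shorter argument at the price of invoking distributivity of $\otimes$ over $\oplus$ and the injectivity of the canonical maps from tensor products of subspaces into $V_1\otimes V_2$, which the paper sidesteps by working directly with bases. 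Your observation that the subspaces must be nonzero is a genuine point the paper glosses over: as stated the lemma fails for $U_1=V_1=0$ and $U_2\subsetneq V_2$, and the paper's own proof silently assumes $U_2\ne 0$ when it picks $b_k$ with $k\in P$. This does not affect the paper's applications, where the subspaces in play are nonzero, but the hypothesis ought to be recorded in the statement.
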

\begin{proof}
Consider $U_1=\spa\{a_i\,|\,i\in I\}$, $V_1=\spa(\{a_i\,|\, i\in I\} \dot{\bigcup}\{a_j\,|\,j\in J\})$, $A_1=\spa(\{a_i\,|\, i\in I\}\dot{\bigcup} \{a_j\,|\,j\in J\}\dot{\bigcup}\{a_h\,|\, h\in H\})$ and
$U_2=\spa\{b_p\,|\,p\in P\}$,
$V_2=\spa(\{b_p\,|\, p\in P\} \dot{\bigcup}\{b_q\,|\,q\in Q\})$, $A_2=\spa(\{b_p\,|\, p\in P\}\dot{\bigcup} \{b_q\,|\,q\in Q\}\dot{\bigcup}\{b_r\,|\, r\in R\})$.
Now we take $x\in V_1$ and $b_k\in V_2$ with $k\in P$ and consider $x\otimes b_k\in V_1\otimes V_2=U_1\otimes U_2$. So $x\otimes b_k=\sum_{i\in I, p\in P}\lambda_{ip}^{(k)}a_i\otimes b_p=\sum_{p\in P}\sum_{i\in I}\lambda_{ip}^{(k)}a_i\otimes b_{p}$.
If we write $z_{kp}=\sum_{i\in I}\lambda_{ip}^{(k)}a_i$ then 
$x\otimes b_{k}=\sum_{p\in P}z_{kp}\otimes b_p$. So $(x-z_{kk})\otimes b_k-\sum_{p\in P, p\neq k} z_{kp}\otimes b_p=0$.

As $\{b_p\,|\,p\in P\}$ is a basis of $U_2$ for Remark \ref{propi}  then $x=z_{kk}\in U_1$. Analogously it can be proved that $V_2\subset U_2$.\end{proof}

For finite dimensional vector spaces the previous lemma can be proved by using a dimensional argument.

\begin{proposition} \label{perfect}
Let $A_1$ and $A_2$ be two algebras. We consider the tensor product $A_1 \otimes A_2$. Then $A_1$ and $A_2$ are perfect  algebras if and only if $A_1 \otimes A_2$ is a perfect algebra.
\end{proposition}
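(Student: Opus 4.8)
The plan is to prove the equivalence by showing that $(A_1 \otimes A_2)^2 = A_1^2 \otimes A_2^2$, and then invoking Lemma \ref{remark:tensorequal} to compare this with $A_1 \otimes A_2$.

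First I would establish the identity $(A_1 \otimes A_2)^2 = A_1^2 \otimes A_2^2$ as subspaces of $A_1 \otimes A_2$. The inclusion $\subseteq$ is immediate from Definition \ref{def:product}: any product $(e_i \otimes f_j)(e_k \otimes f_r) = e_i e_k \otimes f_j f_r$ lies in $A_1^2 \otimes A_2^2$, and since such products span $(A_1 \otimes A_2)^2$, the inclusion follows. For the reverse inclusion, note that $A_1^2 \otimes A_2^2$ is spanned by elements of the form $(uv) \otimes (xy)$ with $u,v \in A_1$ and $x,y \in A_2$, and each such element equals $(u \otimes x)(v \otimes y) \in (A_1 \otimes A_2)^2$. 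Hence the two subspaces coincide.

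Next, suppose $A_1$ and $A_2$ are both perfect, i.e. $A_1^2 = A_1$ and $A_2^2 = A_2$. Then $(A_1 \otimes A_2)^2 = A_1^2 \otimes A_2^2 = A_1 \otimes A_2$, so $A_1 \otimes A_2$ is perfect. Conversely, suppose $A_1 \otimes A_2$ is perfect, so $A_1^2 \otimes A_2^2 = (A_1 \otimes A_2)^2 = A_1 \otimes A_2$. Since $A_i^2 \subseteq A_i$ for $i = 1,2$, Lemma \ref{remark:tensorequal} applies with $U_i = A_i^2$ and $V_i = A_i$, yielding $A_1^2 = A_1$ and $A_2^2 = A_2$; that is, both factors are perfect.

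The only delicate point is the application of Lemma \ref{remark:tensorequal}, which requires $U_i \subseteq V_i$ — automatic here — and handles arbitrary dimension, so no finiteness hypothesis is needed. An alternative route in the finite-dimensional case would be to use the characterization via structure matrices: the structure matrix of $A_1 \otimes A_2$ (with respect to a natural-type basis) is, up to reordering, the Kronecker product of the structure matrices of the factors, and $\det(M_1 \otimes M_2) = (\det M_1)^{\dim A_2}(\det M_2)^{\dim A_1}$ is nonzero exactly when both $\det M_1$ and $\det M_2$ are nonzero. I would mention this as a remark but carry out the subspace argument as the main proof, since it works in arbitrary dimension and for arbitrary (not necessarily evolution) algebras.
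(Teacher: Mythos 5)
Your proof is correct and follows essentially the same route as the paper: both rest on the identity $(A_1\otimes A_2)^2=A_1^2\otimes A_2^2$ together with Lemma \ref{remark:tensorequal} applied to $U_i=A_i^2\subseteq V_i=A_i$. The only (welcome) difference is that you establish that identity unconditionally, via general products $(uv)\otimes(xy)=(u\otimes x)(v\otimes y)$, and reuse it for both implications, whereas the paper proves the forward direction by a separate basis computation and only obtains the reverse inclusion $A_1^2\otimes A_2^2\subseteq(A_1\otimes A_2)^2$ with the help of the perfectness hypothesis.
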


\begin{proof}
Let $B_1=\{e_i\,|\ i\in I\}$ be a  basis of $A_1$ and $B_2=\{f_j\,|\,j\in J\}$ a  basis of $A_2$. We consider $B_1\otimes B_2=\{e_i\otimes f_j\,|\,i\in I, j\in J\}$ a  basis of $A_1\otimes A_2$. 
If we suppose that $A_1$ and $A_2$ are perfect, then $A_1=A_1^2$ and $A_2=A_2^2$.
Moreover $e_i=\sum_q \lambda_{qi}e_q^2$ and $f_j=\sum_t\mu_{tj} f_t^2$.
To see that $A_1\otimes A_2$ is perfect we need to show that $A_1\otimes A_2\subset (A_1\otimes A_2)^2$. 
Indeed, as
$e_i\otimes f_j=\sum_{q,t} \lambda_{qi} \, \mu_{tj}e_q^2\otimes f_t^2=\sum_{q,t} \lambda_{qi} \, \mu_{tj}(e_q\otimes f_t)^2\in (A_1\otimes A_2)^2$ then we have the statement.
 
For the converse, we first prove that 
$(A_1\otimes A_2)^2=A_1^2\otimes A_2^2$. For instance, to prove
the inclusion $(A_1\otimes A_2)^2\subset A_1^2\otimes A_2^2$ we take
$z\in (A_1\otimes A_2)^2$. 
Then $z=\sum_{ijkl}\lambda_{ijkl}(e_i\otimes f_j)(e_k\otimes f_l)=\sum_{ijkl}\lambda_{ijkl}e_i e_k\otimes f_j f_l\in A_1^2\otimes A_2^2$.

For the other inclusion, since $A_1^2\subset A_1$ and $A_2^2\subset A_2$ then $A_1^2\otimes A_2^2\subset A_1\otimes A_2$.
Consequently, as $A_1\otimes A_2$ is perfect we
have $A_1^2\otimes A_2^2 \subset A_1\otimes A_2=(A_1\otimes A_2)^2$. 

So far we know $(A_1\otimes A_2)^2=A_1^2\otimes A_2^2=A_1 \otimes A_2$, by Lemma \ref{remark:tensorequal} we get $A_1^2=A_1$ and $A_2^2=A_2$ as we wanted.
\end{proof}

\begin{remark}
\rm
Not every ideal of the tensor product of two evolution algebras is a tensor product of ideals. Consider $A$ an arbitrary evolution algebra and $A'$ an evolution algebra with $A'^2=0$. Let $(S,+)$ be a vector subspace of $(A,+)$, not an ideal $S\ntriangleleft A$. One can conclude that $S\otimes A' \triangleleft A\otimes A'$. Using the previous remark one can conclude that if $S\otimes A'=I\otimes I'$ with $I\triangleleft A$ and $I'\triangleleft A'$ we get that $S=I$ and $A'=I'$.
\end{remark}

\begin{remark}\label{remark:simple}\rm 
If $A_1$ and $A_2$ are $\K$-algebras and $A_1\otimes A_2$ is simple, then both $A_1$ and $A_2$ are simple algebras: if $0\ne I$ is a proper ideal of $A_1$, then  $0\ne I\otimes A_2\triangleleft A_1\otimes A_2$ is also proper by Lemma  \ref{remark:tensorequal}.
However, there are simple algebras (even evolution algebras) whose tensor product is not simple. For instance, take $\K$ to be a field of
characteristic other than $2$ and $A$ to be the $2$-dimensional $\K$-algebra with a basis $\{e_1,e_2\}$ such that 
$e_2e_1=e_1e_2=0$, $e_1^2=e_2$ and $e_2^2=e_1$. Observe that $A\otimes A$ is not simple because it is a perfect evolution algebra by Proposition \ref{perfect} and its associated graph is not strongly connected (see \cite[Proposition 2.7]{CKS}). 
\end{remark}

\section{The tensor product of evolution algebras}

Now, we focus on the particular case of tensor product of evolution algebras which is again an evolution algebra. 
Indeed, we classify the $4-$dimensional evolution algebras that are tensorially decomposable. When $\K$ is algebraically closed, and $A$ is simple, we investigate which are its factors. We also study the annihilator of the tensor product of evolution algebras. 

\begin{proposition}
If $A_1$ and $A_2$ are evolution $\K$-algebras, then $A_1\otimes A_2$ is also an evolution $\K$-algebra. Furthermore, if $B_1=\{e_{i}\}_{i \in \Lambda}$ be a natural basis of $A_1$ and let $B_2=\{f_{j}\}_{j \in \Gamma}$ be a natural basis of $A_2$, then $\{e_{i}\otimes f_{j}\}_{i\in\Lambda, j\in\Gamma}$ is a natural basis of $A_1 \otimes A_2$.
\end{proposition}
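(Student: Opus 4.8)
The plan is to verify directly from Definition \ref{def:product} that the prescribed basis is a natural basis of the tensor product. First I would take natural bases $B_1=\{e_i\}_{i\in\Lambda}$ of $A_1$ and $B_2=\{f_j\}_{j\in\Gamma}$ of $A_2$, so that $e_ie_k=0$ whenever $i\neq k$ and $f_jf_r=0$ whenever $j\neq r$. Recall from the Preliminaries that $\{e_i\otimes f_j\}_{i\in\Lambda,\,j\in\Gamma}$ is automatically a basis of the vector space $A_1\otimes A_2$; so the only thing to check is the vanishing of the mixed products.

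The key computation is: for $(i,j)\neq(k,r)$ we have $(e_i\otimes f_j)\cdot(e_k\otimes f_r)=e_ie_k\otimes f_jf_r$ by Definition \ref{def:product}. Since $(i,j)\neq(k,r)$, either $i\neq k$ or $j\neq r$. In the first case $e_ie_k=0$, hence the product is $0\otimes f_jf_r=0$; in the second case $f_jf_r=0$, hence the product is $e_ie_k\otimes 0=0$. Thus $(e_i\otimes f_j)\cdot(e_k\otimes f_r)=0$ whenever $(i,j)\neq(k,r)$, which is exactly the defining condition for $\{e_i\otimes f_j\}$ to be a natural basis. Therefore $A_1\otimes A_2$ is an evolution algebra with the stated natural basis.

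There is essentially no obstacle here: the statement is an immediate consequence of the bilinear definition of the tensor product of algebras together with the fact that the tensor product of bases is a basis. I might add, for completeness, an explicit description of the structure constants: if $e_i^2=\sum_{p}\omega_{pi}e_p$ in $A_1$ and $f_j^2=\sum_{q}\omega'_{qj}f_q$ in $A_2$, then $(e_i\otimes f_j)^2=e_i^2\otimes f_j^2=\sum_{p,q}\omega_{pi}\,\omega'_{qj}\,(e_p\otimes f_q)$, so that the structure matrix of $A_1\otimes A_2$ in the basis $\{e_i\otimes f_j\}$ is the Kronecker product $M_{B_1}\otimes M_{B_2}$ (up to the usual reordering of the index set $\Lambda\times\Gamma$). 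This remark sets up the matrix-theoretic point of view used in the later sections.
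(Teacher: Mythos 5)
Your proof is correct and follows essentially the same route as the paper: both reduce the claim to checking that $(e_i\otimes f_j)(e_k\otimes f_r)=e_ie_k\otimes f_jf_r$ vanishes when $(i,j)\neq(k,r)$, because one of the two factors is zero. The only cosmetic difference is that the paper verifies $0\otimes w=v\otimes 0=0$ explicitly through its definition of $v\otimes w$ as a bilinear map on $A_1^\ast\times A_2^\ast$, whereas you invoke this standard fact directly; your closing remark on the Kronecker structure matrix reproduces what the paper states in the remark immediately following the proposition.
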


\begin{proof}
Let $B_1=\{e_{i}\}_{i \in \Lambda}$ be a natural basis of $A_1$ and let $B_2=\{f_{j}\}_{j \in \Gamma}$ be a natural basis of $A_2$. We are going to see that the basis $\{e_{i}\otimes f_{j}\}_{i\in\Lambda, j\in\Gamma}$ is a natural basis of $A_1 \otimes A_2$ i.e., $(e_i\otimes f_j)(e_k \otimes f_s)=e_i e_k \otimes f_j f_s=0$ for $i \neq k$ or $j\neq s$. Let $(h,g) \in A_1^\ast \times A_2^\ast$ then $(e_i\otimes f_j)(e_k \otimes f_s)(h,g)=h(e_i e_k)g(f_j f_s)=0$ if $i \neq k$ or $j\neq s$ since $B_1$ and $B_2$ are natural bases and $h$ and $g$ are linear maps. Therefore, as $(h,g)$ is an arbitrary element of $A_1^\ast \times A_2^\ast$, we get that $e_i e_k \otimes f_j f_s \equiv 0$ for  $i \neq k$ or $j\neq s$.
\end{proof}

\begin{remark}\rm 
({\bf Structure matrix of tensor product of finite dimensional evolution algebras}) 
Suppose $A_1$ and $A_2$ are two finite dimensional evolution $\K$-algebras with natural basis $B_1=\{e_1,\ldots, e_n\}$ and $B_2=\{f_1,\ldots,f_m\}$ respectively. Let $M_{B_1}=(\omega_{ij})$ and $M_{B_2}=(\beta_{lm})$ be the structure matrices associated to $A_1$ and $A_2$ respectively. Then, the structure matrix of the evolution algebra $A_1\otimes A_2$ relative to the basis $B_1\otimes B_2=\{e_1\otimes f_1,\ldots,e_1 \otimes f_m,\ldots,e_n\otimes f_1, \ldots, e_n \otimes f_m \}$ is the Kronecker product of $M_{B_1}$ and $M_{B_2}$, i.e., $M_{B_1\otimes B_2}=M_{B_1}\otimes M_{B_2}$. Indeed, as
$(e_i\otimes f_j)^2= e_i^2 \otimes f_j^2= \sum_{k=1}^n \omega_{ki} \sum_{t=1}^m \beta_{tj} e_k \otimes  f_t$ then we obtain that $M_{B_1\otimes B_2}=M_{B_1}\otimes M_{B_2}$.
\end{remark}

\begin{lemma}\label{p2}
  Let $A_1$ and $A_2$ be evolution algebras, $B_1$ and $B'_1$ natural basis of $A_1$ and $B_2$ and $B'_2$  natural basis of $A_2$. We denote by $P$ the basis change matrix of $A_1$ from ${B_1}$ to ${B'_1}$ and $Q$ the basis change matrix of $A_2$ from ${B_2}$ to ${B'_2}$. Hence, $$M_{B_1\otimes B_2}=(P\otimes Q)^{-1}(M_{B'_1}\otimes M_{B'_2})(P\otimes Q)^{(2)}.$$
\end{lemma}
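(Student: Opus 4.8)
The plan is to obtain the identity by combining two facts: the standard transformation rule for the structure matrix of a single evolution algebra under a change of natural basis, and the earlier remark identifying the structure matrix of a tensor product of evolution algebras with the Kronecker product of the structure matrices of the factors.

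First I would recall (or re-derive in one line) the change of natural basis rule for a single evolution algebra: if $\mathcal C$ and $\mathcal C'$ are natural bases and $S$ denotes the change of basis matrix from $\mathcal C$ to $\mathcal C'$, then $M_{\mathcal C}=S^{-1}M_{\mathcal C'}S^{(2)}$, where $S^{(2)}$ is the matrix obtained by squaring every entry of $S$. This follows from expanding $e_i^2$ (for $e_i$ a vector of $\mathcal C$) in the basis $\mathcal C'$: if $e_i=\sum_k s_{ki}e'_k$, then the cross terms vanish because $\mathcal C'$ is natural, so $e_i^2=\sum_k s_{ki}^2(e'_k)^2$; rewriting $(e'_k)^2$ through $M_{\mathcal C'}$ and converting coordinates back to $\mathcal C'$ through $S^{-1}$ assembles exactly into $S^{-1}M_{\mathcal C'}S^{(2)}$.

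I would then apply this to the evolution algebra $A_1\otimes A_2$, which by the proposition above carries the two natural bases $B_1\otimes B_2$ and $B'_1\otimes B'_2$. The key elementary observation is that the change of basis matrix of $A_1\otimes A_2$ from $B_1\otimes B_2$ to $B'_1\otimes B'_2$ is the Kronecker product $P\otimes Q$: writing $e_i=\sum_k p_{ki}e'_k$ and $f_j=\sum_l q_{lj}f'_l$ gives $e_i\otimes f_j=\sum_{k,l}p_{ki}q_{lj}(e'_k\otimes f'_l)$, so the coordinates of $e_i\otimes f_j$ in $B'_1\otimes B'_2$ are precisely the entries of the column of $P\otimes Q$ indexed by $(i,j)$. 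The transformation rule then gives $M_{B_1\otimes B_2}=(P\otimes Q)^{-1}M_{B'_1\otimes B'_2}(P\otimes Q)^{(2)}$, and substituting $M_{B'_1\otimes B'_2}=M_{B'_1}\otimes M_{B'_2}$ (the remark on the structure matrix of a tensor product, applied to $B'_1$ and $B'_2$) yields $M_{B_1\otimes B_2}=(P\otimes Q)^{-1}(M_{B'_1}\otimes M_{B'_2})(P\otimes Q)^{(2)}$, which is the claim. One may additionally record that $(P\otimes Q)^{(2)}=P^{(2)}\otimes Q^{(2)}$, since squaring entries commutes with forming Kronecker products.

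The step deserving care, rather than being a genuine obstacle, is the bookkeeping of conventions: one must fix which matrix is called ``the change of basis matrix from $B_1$ to $B'_1$'' so that in the single-algebra rule the inverse and the entrywise square land on the correct sides, and then use that same convention when identifying the change of basis matrix of $A_1\otimes A_2$ as $P\otimes Q$. Once the conventions are pinned down consistently, the argument is a direct substitution.
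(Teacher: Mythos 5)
Your argument is correct. It differs mildly but genuinely from the paper's route: the paper tensors the two single-algebra transformation rules $M_{B_1}=P^{-1}M_{B'_1}P^{(2)}$ and $M_{B_2}=Q^{-1}M_{B'_2}Q^{(2)}$, starts from $M_{B_1\otimes B_2}=M_{B_1}\otimes M_{B_2}$, and then assembles the result using the mixed-product property of the Kronecker product together with the identities $(P\otimes Q)^{-1}=P^{-1}\otimes Q^{-1}$ and $(P\otimes Q)^{(2)}=P^{(2)}\otimes Q^{(2)}$. You instead treat $A_1\otimes A_2$ as a single evolution algebra carrying the two natural bases $B_1\otimes B_2$ and $B'_1\otimes B'_2$, observe that its change-of-basis matrix is $P\otimes Q$, apply the one-algebra rule once, and only then substitute $M_{B'_1\otimes B'_2}=M_{B'_1}\otimes M_{B'_2}$. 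Your version trades the mixed-product computation for the (elementary) identification of $P\otimes Q$ as the change-of-basis matrix of the tensor product, and it makes clearer that the lemma is literally an instance of the general transformation rule; the paper's version keeps everything at the level of the $2$-dimensional factors and leans on standard Kronecker identities. Your closing remark about pinning down the convention for which side carries the inverse and which the entrywise square is exactly the right point of care, and your observation that the basis $B'_1\otimes B'_2$ must be ordered compatibly with the Kronecker convention (the paper's ``except reordering'') is implicitly needed in both proofs.
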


\begin{proof}
The proof is a consequence of the relation between two structure matrices of an evolution algebra and the facts that if $P$ and $Q$ be square matrices, then $(P\otimes Q)^{(2)}=P^{(2)}\otimes Q^{(2)} $ and $(P\otimes Q)^{-1}= P^{-1}\otimes Q^{-1}$. 
\end{proof}

\begin{remark}\label{nonperfect}\rm
Recall that if $M_1 \in \mathcal{M}_n(\K)$ and $M_2\in \mathcal{M}_m(\K)$, then $ \vert M_1 \otimes M_2 \vert = \vert M_1 \vert^m \vert M_2\vert^n$ (see \cite[Corollary p.6]{SYLVESTER}).
So,  $M_1\otimes M_2$ is a non perfect evolution algebra if and only if either $M_1$ or $M_2$ is also a non perfect evolution algebra.  
\end{remark}

\begin{example} \rm
Let us build the tensor product of two  evolution algebras. Consider $A_1$ an evolution algebra with natural basis $B_1=\{e_1,e_2,e_3\}$ and structure matrix
$M_{B_1}=\tiny\begin{pmatrix}
 1 & -1 & 0\\
  1 & -1 & -1  \\
0 & 0 & 1  
\end{pmatrix}.$
Consider $A_2$ another evolution algebra with natural basis $B_2=\{f_1,f_2,f_3\}$ and structure matrix
$M_{B_2}=\tiny\begin{pmatrix}
 0 & 1 & 0\\
 1 & 0 & 0\\
 0 & 0 & 1
\end{pmatrix}.$
Hence, the tensor product $A_1\otimes A_2$ is the evolution algebra with basis $B_1\otimes B_2=\{e_1\otimes f_1, e_1\otimes f_2, e_1\otimes f_3, e_2\otimes f_1, e_2\otimes f_2, e_2\otimes f_3, e_3\otimes f_1, e_3\otimes f_2,e_3\otimes f_3\}$ and structure matrix\\
$$M_{B_1\otimes B_2}=\begin{pmatrix}
 1 M_{B_2} & -1 M_{B_2} & 0 M_{B_2}\\
 1 M_{B_2} & -1 M_{B_2} & -1 M_{B_2}\\
 0 M_{B_2} & 0 M_{B_2} & 1 M_{B_2}
 \end{pmatrix}=\tiny
\left(
\begin{array}{ccc|ccc|ccc}
 0 & 1 & 0 & 0 & -1 & 0 & 0 & 0 & 0\\
 1 & 0 & 0 & -1 & 0 & 0 & 0 & 0 & 0 \\
 0 & 0 & 1 & 0 & 0 & -1 & 0 & 0 & 0 \\ \hline
 0 & 1 & 0 & 0 & -1 & 0 & 0 & -1 & 0\\
 1 & 0 & 0 & -1 & 0 & 0 & -1 & 0 & 0\\
 0 & 0 & 1 & 0 & 0 & -1 & 0 & 0 & -1\\ \hline
 0 & 0 & 0 & 0 & 0 & 0 & 0 & 1 & 0 \\
 0 & 0 & 0 & 0 & 0 & 0 & 1 & 0 & 0 \\
 0 & 0 & 0 & 0 & 0 & 0 & 0 & 0 & 1\\
 \end{array}
\right).$$
\end{example}

\begin{example}\label{dossiete}\rm 
Let $\K$ a field with characteristic different from two (we denote by $\car(\K)$ the characteristic of $\K$) and consider the $2$-dimensional $\K$-algebra $A_1$ with basis $\{e_1,e_2\}$ such that $e_1^2=e_2^2=0$ and $e_1e_2=e_2=-e_2e_1$. 
It can be checked that this is not an evolution algebra by computing the evolution test ideal $I$ and seeing that $1\in I$ (see \cite{CCMM}). Indeed the evolution test ideal is $I=(xt-yz,w(xt-yz)-1)\triangleleft \K[x,y,z,t,w]$ and clearly $1\in I$. Hence $A_2$ is not an evolution algebra. However $A_2:=A_1\otimes A_1$ is an evolution algebra as can be proved by applying the evolution test ideal in the papers \cite{CCMM} and \cite{CCMM2}. First we consider $u_1=e_1\otimes e_1$, $u_2=e_1\otimes e_2$, $u_3=e_2\otimes e_1$ and $u_4=e_2\otimes e_2$. The multiplication table of $A_2$ relative to the
basis $\{u_i\}_1^4$ is 
\begin{center}
\begin{tabular}{|c|c|c|c|c|}
\hline 
$\cdot$ & $u_1$ & $u_2$ & $u_3$ & $u_4$\\
\hline 
$u_1$ & $0$ & $0$ & $0$ & $u_4$\\ 
$u_2$ & $0$ & $0$ & $-u_4$ & 0\\
$u_3$ & $0$ & $-u_4$ & $0$ & $0$\\
$u_4$ & $u_4$ & $0$ & $0$ & $0$\\
\hline
\end{tabular}
\end{center}
and then a natural basis for $A_2$ is $\{u_1+u_4,u_1-u_4,u_2+u_3,u_2-u_3\}$. Hence there is a four-dimensional evolution algebra which is the tensor product of $2$-dimensional algebras which are not evolution algebras. 
Under the assumption $\car(\K)\ne 2$, it seems that $A_2$ is the unique $4$-dimensional algebra which splits as a tensor product
of $2$-dimensional anticommutative algebras (using Kaygorodov and Volkov classification for algebraically closed fields in \cite{Ivan} 
the unique $2$-dimensional algebra in which any element has zero square is $A_1$ up to isomorphism).
Observe that $A_2$ is not simple hence it is worth to investigate if there exist $4$-dimensional simple evolution algebras which split as the tensor product of $2$-dimensional algebras which are not evolution algebras. 
\end{example}

Now we assume that $A=A_1\otimes A_2$ is a simple evolution algebra with $\dim(A_i)=2$ for $i=1,2$. By Proposition \ref{laqeme} we know that both $A_1$ and $A_2$ are commutative or both are zero-square algebras (and simple because their tensor product is simple, Remark \ref{remark:simple}). We assume for now that $\K$ is algebraically closed. So we can determine those algebras from the classification in \cite[Table 1]{Ivan} which are commutative: 

\newpage
\begin{center}\footnotesize
\begin{tabular}{|c|c|c|c|c|c|c|c|}
\hline 
$\begin{matrix} \A_1(\frac{1}{2})\\ \text{Not simple}
\end{matrix}$ & $\begin{matrix} \A_2\ \text{if}\ $\car=2$\\\text{Not simple}\end{matrix}$ & $\begin{matrix} \A_3\\\text{Not simple}\end{matrix}$ & 
$\begin{matrix} \A_4(0)\ \text{if}\ \car=2\\\text{Simple}\end{matrix}$ & $\begin{matrix} \mathsf{B}_2(\frac{1}{2})\\\text{Not simple}\end{matrix}$ & $\begin{matrix}
\mathsf{B}_3\ \text{if}\ $\car=2$\\\text{Not simple} \end{matrix}$ & $\begin{matrix}
\mathsf{C}(\frac{1}{2},0)\ \\\text{Simple} \end{matrix}$ &  $\begin{matrix} \mathsf{D}_1(\frac{1}{2},0)\\ \text{Not simple} \end{matrix}$\\
\hline
\end{tabular} 
\end{center}

\begin{center}\footnotesize
\begin{tabular}{|c|c|c|c|}
\hline 
$\begin{matrix} \mathsf{D}_2(\alpha,\alpha), \alpha\neq \frac{1}{2}\\ \text{Not simple} \end{matrix}$ & $\begin{matrix} \mathsf{D}_3(\alpha,\alpha)\text{ if}\ $\car=2$\\ \text{Simple if}\ \alpha\ne 0
\end{matrix}$ & $\begin{matrix} \mathsf{E}_1(\alpha,\beta,\alpha,\beta), 4\alpha\beta\neq 1, 2\beta\neq 1, 2\alpha\neq 1
\\\text{Simple if } \alpha+\beta\ne 1\\ \alpha,\beta\ne 0\end{matrix}$ & $\begin{matrix} \mathsf{E}_2(\frac{1}{2},\beta,\beta), \\\text{Simple if }\beta\ne\frac{1}{2},0 \end{matrix}$ \\
    \hline 
    \end{tabular} 
\end{center}

\begin{center}\footnotesize
\begin{tabular}{|c|c|}
\hline 
 $\begin{matrix} \mathsf{E}_3(\frac{1}{2},\frac{1}{2},\gamma), \gamma\ne 0\\\text{Simple if }\gamma\ne 1\end{matrix}$ &  $\begin{matrix} \mathsf{E}_5(\frac{1}{2})\\ \text{Not simple}\end{matrix}$  \\
    \hline 
    \end{tabular} 
\end{center}

The only two-dimensional algebras that are zero-square is $B_3$.
The unique commutative simple nonevolution algebras of the above table are: $\A_4(0)$, $\mathsf{D}_3(\alpha,\alpha)$ and $\mathsf{E}_3(\frac{1}{2},\frac{1}{2},\gamma)$. 
To check if an algebra is an evolution algebra we use the evolution test ideal (see \cite{CCMM}) computing Groebner basis of that ideal.
We summarize all the information in the following table.

\begin{center}\tiny
\begin{tabular}{|c|c|c|}
\hline
Simple algebra & Evolution? & Nat. Basis $\{e_1,e_2\}$\\
\hline 
$\A_4(0)$, $\car=2$ & FALSE & \\
\hline
$C(\frac{1}{2},0)$  & TRUE & $\small\begin{cases}e_1+e_2\\e_1-e_2\end{cases}$\\
\hline
$\begin{matrix}\mathsf{D}_3(\alpha,\alpha), \car=2\cr \alpha\ne 0\end{matrix}$ & FALSE  &\\
\hline
$\begin{matrix}\mathsf{E}_1(\alpha,\beta,\alpha,\beta)\cr \alpha,\beta\ne 0,\frac{1}{2},\alpha\beta\ne\frac{1}{4}\\
\alpha+\beta\ne 1\end{matrix}$ & TRUE & $\small\begin{cases}
e_1+\frac{\Delta-1}{2\alpha}\ e_2\\
-\frac{\alpha}{\Delta}e_1+\frac{1}{2}(1+\frac{1}{\Delta})\ e_2\\\Delta=\sqrt{1-4\alpha\beta}
\end{cases}$
\\
\hline
$\begin{matrix}\mathsf{E}_2(\frac{1}{2},\beta,\beta)\cr \beta\ne 0,\frac{1}{2}\end{matrix}$ & TRUE & $\small\begin{cases}
e_1+(\sqrt{1-2\beta}-1)\ e_2\\
e_1-(\sqrt{1-2\beta}+1)\ e_2
\end{cases}$\\
\hline
$\begin{matrix}\mathsf{E}_3(\frac{1}{2},\frac{1}{2},\gamma)\cr \gamma\ne 0,1\end{matrix}$ & FALSE  &\\
\hline
\end{tabular}
\end{center}

Next, we investigate if there is or not $4$-dimensional simple evolution algebras $A$ which split as the tensor products of two necessarily simple non evolution algebras using the simultaneous diagonalization algorithm explained in \cite{CCMM}.
Thus in the decomposition
$A=A_1\otimes A_2$ we have both $A_1,A_2$ isomorphic either to $\A_4(0)$, $\mathsf{D}_3(\alpha,\alpha)$ or to $\mathsf{E}_3(\frac{1}{2},\frac{1}{2},\gamma)$. One can check that none of the algebras 
$\A_4(0)\otimes \A_4(0)$, $\A_4(0)\otimes \mathsf{D}_3(\alpha,\alpha)$, $\mathsf{D}_3(\alpha,\alpha)\otimes \mathsf{D}_3(\alpha,\alpha)$ and $\mathsf{E}_3(\frac{1}{2},\frac{1}{2},\gamma)\otimes \mathsf{E}_3(\frac{1}{2},\frac{1}{2},\gamma)$ is an evolution algebra.

Moreover, we have check that $\mathsf{E}_3(\frac{1}{2},\frac{1}{2},\gamma)\otimes \mathsf{C}(\frac{1}{2},0)$, $\mathsf{E}_3(\frac{1}{2},\frac{1}{2},\gamma)\otimes \mathsf{E}_1(\alpha,\beta,\alpha,\beta)$ and $\mathsf{E}_3(\frac{1}{2},\frac{1}{2},\gamma)\otimes \mathsf{E}_2(\frac{1}{2},\beta,\beta)$ are not evolution algebras.

\begin{proposition}\label{simplevo}
If $\K$ is algebraically closed and $A$ is a four-dimensional simple evolution algebra with $A=A_1\otimes A_2$, then both factors $A_1$ and $A_2$ are evolution algebras.
\end{proposition}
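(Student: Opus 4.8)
The plan is to reduce the statement to a finite verification built on the classification of $2$-dimensional algebras over an algebraically closed field. Since $\dim(A)=4$ and $A=A_1\otimes A_2$ with $\dim(A_i)>1$, we must have $\dim(A_1)=\dim(A_2)=2$. As $A$ is an evolution algebra it is commutative, and as $A$ is simple we have $A^2\ne 0$; hence Proposition~\ref{laqeme} applies and yields that either both $A_1,A_2$ are commutative or both are zero-square algebras. Moreover, by Remark~\ref{remark:simple}, the simplicity of $A=A_1\otimes A_2$ forces $A_1$ and $A_2$ to be simple.

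I would then discard the zero-square alternative. If $A_i$ were a $2$-dimensional zero-square algebra with basis $\{e_1,e_2\}$, then $e_1^2=e_2^2=(e_1+e_2)^2=0$ forces $e_2e_1=-e_1e_2$, so every product of two elements of $A_i$ is a scalar multiple of $e_1e_2$; thus $\dim(A_i^2)\le 1<\dim(A_i)$, and since $A_i^2$ is an ideal of $A_i$ this makes $A_i$ not simple (either $A_i^2=0$, or $A_i^2$ is a proper nonzero ideal). This contradicts the previous paragraph. (This is consistent with Example~\ref{dossiete}: a tensor product of two zero-square algebras can be an evolution algebra, but such a product is never simple.) So both $A_1$ and $A_2$ are commutative, simple, $2$-dimensional $\K$-algebras; this part uses nothing about $\K$ except that it is a field.

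Now I would invoke the Kaygorodov--Volkov classification \cite{Ivan}: over an algebraically closed field the commutative simple $2$-dimensional algebras are exactly the finitely many parametrized families displayed before the statement, and among them the ones that are \emph{not} evolution algebras are precisely $\A_4(0)$ (when $\car(\K)=2$), $\mathsf{D}_3(\alpha,\alpha)$ (when $\car(\K)=2$, $\alpha\ne 0$), and $\mathsf{E}_3(\frac{1}{2},\frac{1}{2},\gamma)$ ($\gamma\ne 0,1$); this last fact is certified by computing Gr\"obner bases of the evolution test ideal of \cite{CCMM}. Arguing by contradiction, suppose some factor, say $A_1$, is not an evolution algebra; then $A_1$ is isomorphic to one of these three algebras, while $A_2$ runs over the commutative simple $2$-dimensional algebras. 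For each admissible unordered pair $\{A_1,A_2\}$ with at least one non-evolution factor --- using that $\otimes$ is commutative up to isomorphism and keeping track of which families can coexist in a fixed characteristic --- I would form the structure matrix $M_{A_1}\otimes M_{A_2}$ and apply the evolution test ideal, equivalently the simultaneous diagonalization criterion of \cite{CCMM}. In every case one finds that $A_1\otimes A_2$ is not an evolution algebra, contradicting the hypothesis on $A$. Hence neither factor can be non-evolution, which proves the proposition.

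The step I expect to be the main obstacle is exactly this last case analysis. Each individual test is a routine Gr\"obner basis computation, but the bookkeeping is delicate: one must use the completeness of \cite{Ivan} to be sure that no commutative simple $2$-dimensional algebra is omitted, one must separate the characteristic-$2$ families $\A_4(0)$ and $\mathsf{D}_3(\alpha,\alpha)$ from the families built with $\frac{1}{2}$, and one must carry the parameters $\alpha,\beta,\gamma$ through the computations so that ``not an evolution algebra'' is established for an entire family and not merely for generic values.
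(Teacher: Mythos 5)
Your proposal is correct and follows essentially the same route as the paper: reduce to two $2$-dimensional factors, apply Proposition~\ref{laqeme} and Remark~\ref{remark:simple} to force both factors to be simple and either both commutative or both zero-square, and then settle the commutative case by running through the Kaygorodov--Volkov classification and the evolution-test-ideal computations on the finitely many pairs involving a non-evolution factor. The only (minor, and welcome) divergence is that you eliminate the zero-square alternative by the elementary observation that a $2$-dimensional zero-square algebra satisfies $\dim(A_i^2)\le 1$ and hence cannot be simple, whereas the paper instead cites the classification to identify $\mathsf{B}_3$ as the unique such algebra and notes it is not simple.
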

\begin{proof}
If some of $A_1$ or $A_2$ is one-dimensional, then the decomposition is trivial (the ground field, which is an evolution algebra, tensor product with something isomorphic to $A$, hence an evolution algebra). Next we assume $\dim(A_i)=2$ for $i=1,2$.
By Proposition \ref{laqeme} both factors are commutative or zero square algebras.
In the commutative case, the discussion is made above. 
If both factors are zero square algebras, the only possibility is the algebra $\mathsf{B}_3$ in \cite[Table 1]{Ivan} but it is not simple.
\end{proof}

\begin{proposition}\label{rem:zuru}
Let $A_1$ and $A_2$ be two evolution algebras. Then $A_1$ and $A_2$ are nondegenerate  if and only if $A_1\otimes A_2$ is nondegenerate. 
\end{proposition}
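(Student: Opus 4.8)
The plan is to use the characterisation of nondegeneracy in terms of the structure matrix and reduce everything to the Kronecker product. Recall that an evolution algebra with natural basis $B$ is nondegenerate if and only if no column of its structure matrix $M_B$ is identically zero (since $e_i^2=0$ precisely when the $i$-th column of $M_B$ vanishes, and by \cite[Corollary 1.5.4]{YolandaTesis} this property does not depend on the chosen natural basis; in the infinite-dimensional case one works with the analogous condition $e_i^2\neq 0$ for all $i$ in some, equivalently any, natural basis).

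First I would fix natural bases $B_1=\{e_i\}_{i\in\Lambda}$ of $A_1$ and $B_2=\{f_j\}_{j\in\Gamma}$, so that $B_1\otimes B_2=\{e_i\otimes f_j\}$ is a natural basis of $A_1\otimes A_2$ by the Proposition above. The key computation is the identity $(e_i\otimes f_j)^2 = e_i^2\otimes f_j^2$, which follows directly from Definition \ref{def:product}. Then I would argue both directions using Remark \ref{propi}: if $A_1$ and $A_2$ are nondegenerate, then $e_i^2\neq 0$ and $f_j^2\neq 0$ for all $i,j$; writing $f_j^2=\sum_t \beta_{tj} f_t$ with some $\beta_{t_0 j}\neq 0$, we get $e_i^2\otimes f_j^2 = \sum_t \beta_{tj}\, (e_i^2\otimes f_t)$, and since the $f_t$ are linearly independent and $e_i^2\neq 0$, Remark \ref{propi} (applied with $e_i^2\otimes f_t$, noting the $e_i^2$ need not be independent but the nonzero coefficient $\beta_{t_0j}$ forces a nonzero summand) gives $e_i^2\otimes f_j^2\neq 0$; hence every basis square $(e_i\otimes f_j)^2$ is nonzero and $A_1\otimes A_2$ is nondegenerate. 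The cleanest way to see $u\otimes v\neq 0$ for $u\neq 0$, $v\neq 0$ is simply that $u\otimes v$, evaluated on a pair of dual functionals $(h,g)$ with $h(u)\neq 0$ and $g(v)\neq 0$, gives $h(u)g(v)\neq 0$, exactly as in the proof that $B_1\otimes B_2$ is a natural basis.

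For the converse, suppose $A_1$ is degenerate, so that (in the chosen natural basis, or in any one) there is an index $i_0$ with $e_{i_0}^2=0$. Then for every $j$ we have $(e_{i_0}\otimes f_j)^2 = e_{i_0}^2\otimes f_j^2 = 0$, so $A_1\otimes A_2$ has a natural basis element with zero square, hence is degenerate; by symmetry the same holds if $A_2$ is degenerate. This proves the contrapositive of ``$A_1\otimes A_2$ nondegenerate $\Rightarrow$ $A_1$ and $A_2$ nondegenerate.''

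The only subtlety worth flagging — and the step I would be most careful about — is the invariance of nondegeneracy under change of natural basis, i.e. that exhibiting one natural basis element of $A_1\otimes A_2$ with zero square genuinely certifies degeneracy; this is precisely \cite[Corollary 1.5.4]{YolandaTesis}, which guarantees the definition is basis-independent, so once we have produced such an element in the natural basis $B_1\otimes B_2$ we are done. Everything else is the elementary tensor bookkeeping above, so no real obstacle remains; the argument is essentially a one-line computation $(e_i\otimes f_j)^2=e_i^2\otimes f_j^2$ packaged with the nonvanishing criterion for elementary tensors.
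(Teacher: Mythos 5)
Your proposal is correct and follows essentially the same route as the paper: both proofs fix natural bases, use $(e_i\otimes f_j)^2=e_i^2\otimes f_j^2$, reduce the forward direction to the fact that a tensor of two nonzero vectors is nonzero (the paper spells this out in structure constants $\omega_{si}\sigma_{tj}=0$, you invoke it via dual functionals), and prove the converse by exhibiting $(e_k\otimes f_j)^2=0$ from a degenerate factor. No gaps.
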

\begin{proof}
In order to see this, take natural bases $B_1=\{e_i\,|\,i\in \Lambda_1\}$ of
$A_1$ and $B_2=\{f_j\,|\,j\in \Lambda_2\}$ of $A_2$. Assume that $e_i^2=\sum_s\omega_{si} e_s$
and $f_j^2=\sum_t\sigma_{tj} e_t$, so the structure constants of $A_1$ are the
$\omega_{si}$'s (and those of $A_2$ are the $\sigma_{tj}$'s).
We know that $B_1\otimes B_2=\{e_i\otimes f_j\,|\,i\in \Lambda_1,j\in \Lambda_2\}$ is
a natural basis of $A_1\otimes A_2$. If there is some $(i,j)\in \Lambda_1\times \Lambda_2$ such
that $(e_i\otimes f_j)^2=0$, then 
$0=\sum_{s,t}\omega_{si}\sigma_{tj} e_s\otimes f_t$ and we have $\omega_{si}\sigma_{tj}=0$ for any $s$ and $t$. Thus if there is $s\in \Lambda_1$ 
such that $\omega_{si}\ne 0$, then $\sigma_{tj}=0$ for any $t$ contradicting that
$A_2$ is nondegenerate. So $\omega_{si}=0$ for any $s\in \Lambda_1$ contradicting that
$A_1$ is nondegenerate. Reciprocally, suppose that $A_1$ is degenerate, then there exists $k \in \Lambda_1$ such that $e_k^2=0$. Then $(e_k \otimes f_t)^2=0$ for every $t \in \Lambda_2$, so $A_1 \otimes A_2$ is degenerate.
\end{proof}

\begin{lemma}\label{ann}
If $A_i$ are evolution algebras over $\K$ ($i=1,2$), then
$$\ann(A_1\otimes A_2)=A_1\otimes\ann(A_2)+\ann(A_1)\otimes A_2.$$ 
\end{lemma}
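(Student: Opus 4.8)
The plan is to establish the two inclusions separately, working with natural bases throughout. Fix natural bases $B_1=\{e_i\}_{i\in\Lambda_1}$ of $A_1$ and $B_2=\{f_j\}_{j\in\Lambda_2}$, so that $B_1\otimes B_2=\{e_i\otimes f_j\}$ is a natural basis of $A_1\otimes A_2$ with $(e_i\otimes f_j)^2=e_i^2\otimes f_j^2$. The easy inclusion is $\supseteq$: if $x\in\ann(A_1)$, then for any $y\in A_1$ and any $u,v\in A_2$ we have $(x\otimes u)(y\otimes v)=xy\otimes uv=0$ since $xy=0$; extending bilinearly shows $x\otimes A_2\subseteq\ann(A_1\otimes A_2)$, and likewise $A_1\otimes\ann(A_2)\subseteq\ann(A_1\otimes A_2)$, whence the sum is contained in the annihilator.

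For the reverse inclusion $\subseteq$, I would first record a description of the annihilator of an evolution algebra in terms of a natural basis: in an evolution algebra with natural basis $\{e_i\}$ and structure constants $e_i^2=\sum_s\omega_{si}e_s$, an element $z=\sum_i\lambda_i e_i$ lies in the annihilator if and only if $\lambda_i e_i^2=0$ for each $i$, i.e. $\lambda_i=0$ whenever $e_i^2\ne0$ (this is because $z e_k=\lambda_k e_k^2$, as distinct basis vectors multiply to zero). Consequently $\ann(A_1)=\spa\{e_i : e_i^2=0\}$ and similarly for $A_2$. Applying this to $A_1\otimes A_2$: an element $w=\sum_{i,j}\mu_{ij}\,e_i\otimes f_j$ annihilates $A_1\otimes A_2$ iff $\mu_{ij}(e_i\otimes f_j)^2=0$ for all $i,j$, i.e. $\mu_{ij}=0$ whenever $(e_i\otimes f_j)^2\ne0$. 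But $(e_i\otimes f_j)^2=e_i^2\otimes f_j^2$, and by Remark \ref{propi} this is nonzero precisely when $e_i^2\ne0$ \emph{and} $f_j^2\ne0$. So $\mu_{ij}$ can only be nonzero when $e_i^2=0$ or $f_j^2=0$, which is exactly the condition that $w$ lies in $\spa\{e_i : e_i^2=0\}\otimes A_2 + A_1\otimes\spa\{f_j : f_j^2=0\}=\ann(A_1)\otimes A_2+A_1\otimes\ann(A_2)$.

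The remaining point to verify carefully is that the set $\{e_i\otimes f_j : e_i^2=0 \text{ or } f_j^2=0\}$ spans $\ann(A_1)\otimes A_2+A_1\otimes\ann(A_2)$ and nothing more — that is, that the decomposition of indices into ``$e_i^2=0$'', ``$f_j^2=0$'', and ``both nonzero'' matches the subspace sum on the right. Splitting $\Lambda_1=\Lambda_1^0\sqcup\Lambda_1^+$ according to whether $e_i^2=0$, and similarly $\Lambda_2=\Lambda_2^0\sqcup\Lambda_2^+$, the index set where $\mu_{ij}$ may be nonzero is $(\Lambda_1^0\times\Lambda_2)\cup(\Lambda_1\times\Lambda_2^0)$, and the span of the corresponding $e_i\otimes f_j$ is visibly $(\spa\{e_i\}_{i\in\Lambda_1^0})\otimes A_2 + A_1\otimes(\spa\{f_j\}_{j\in\Lambda_2^0})$, which is the right-hand side. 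I do not anticipate a serious obstacle here; the only subtlety is making sure one uses the basis-free characterization of $\ann$ for evolution algebras correctly (independence of natural basis is guaranteed by the cited \cite[Corollary 1.5.4]{YolandaTesis} and the nondegeneracy results, though here we work with one fixed natural basis throughout and compute directly, so even that is not strictly needed).
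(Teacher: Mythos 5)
Your argument is correct and follows essentially the same route as the paper: both work in the natural basis $\{e_i\otimes f_j\}$, reduce membership in the annihilator to the vanishing of $\lambda_{ij}\,e_i^2\otimes f_j^2$, and conclude that a nonzero coefficient forces $e_i^2=0$ or $f_j^2=0$, hence $e_i\in\ann(A_1)$ or $f_j\in\ann(A_2)$. Your version is slightly more explicit (you spell out the easy inclusion and the description of $\ann$ as the span of the null basis vectors, which the paper leaves implicit), but the core idea is identical.
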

\begin{proof}
The nontrivial inclusion is as follows: take natural basis $\{e_i\}_{i\in I}$ of $A_1$ and $\{f_j\}_{j\in J}$ of $A_2$. We assume $e_i^2=\sum_n\omega_{ni} e_n$ and similarly $f_j^2=\sum_m\sigma_{mj} f_m$.
Choose
an arbitrary $z\in\ann(A_1\otimes A_2)$. So $z=\sum_{ij}\lambda_{ij}e_i\otimes f_j$ for some scalars  $\lambda_{ij}\in \K$. But $z(e_p\otimes f_q)=0$ for any $(p,q)\in I\times J$. This implies that $$\lambda_{pq}e_p^2\otimes f_q^2=
\sum_{s,t}\lambda_{pq}\omega_{sp} e_s\otimes \sigma_{tq} f_t=\sum_{s,t}\lambda_{pq}\omega_{sp}\sigma_{tq}(e_s\otimes f_t)=0$$ for any $p\in I$ and $q\in J$. 
Thus $\lambda_{pq}\omega_{sp}\sigma_{tq}=0$ for any $p,s\in I$ and $q,t\in J$.
Next we prove that for any $p\in I$ and $q\in J$ such that $\lambda_{pq}\ne 0$ we have $e_p\in\ann(A_1)$
or $f_q\in\ann(A_2)$: indeed if $e_p\notin\ann(A_1)$, then
some $\omega_{sp}\ne 0$ and then $\lambda_{pq}\sigma_{tq}=0$ for any $q\in J$. Since $\lambda_{pq}\ne 0$ we conclude that $\sigma_{tq}=0$ for any $t$. Thus $f_q^2=0$ and $f_q\in\ann(A_2)$. In a similar way it is proved that if
$f_q\notin\ann(A_2)$, then $e_p\in\ann(A_1)$. Thus
$z\in\ann(A_1)\otimes A_2+A_1\otimes\ann(A_2)$.
\end{proof}

\section{Graph associated to a tensor product of evolution algebras}

We recall some definitions relative to graphs. A \textit{directed graph} is a $4$-tuple $G=(E_G^0, E_G^1, s_G, r_G)$ with $E_G^0$, $E_G^1$  sets and $s_G, r_G: E_G^1 \to E_G^0$ maps. The \textit{vertices} of $G$ are the elements of $E_G^0$ and the \textit{arrows} or \textit{directed edges} of $G$ are the elements of $E_G^1$. For
$f\in E_G^1$ the vertices $r(f)$ and $s(f)$ are called the \textit{range} and the \textit{source} of $f$, respectively. A $v\in E_G^{0}$ is called \textit{sink} if it verifying that $s(f) \neq v$, for every $f\in E_G^1$.  
A finite sequence of arrows $\mu=f_1\dots f_n$  in  $G$, such that $r(f_i)=s(f_{i+1})$ for $i\in\{1,\dots,(n-1)\}$ is called a \textit{path} or a \textit{path from $s(f_1)$ to $r(f_n)$}.  In this case we say that $n$ is the \textit{length} of the path $\mu$ and denote by $\mu^0$ the set $\mu^0:=\{s(f_1),r(f_1),\dots,r(f_n)\}$. Let $\mu = f_1 \dots f_n$ be a path in $G$. If  $ \vert\mu\vert =n\geq 1$, and if $v=s(\mu)=r(\mu)$, then $\mu$ is called a \textit{closed path based at $v$}.  If $\mu = f_1 \dots f_n$ is a closed path based at $v$ and $s(f_i)\neq s(f_j)$ for every $i\neq j$, then $\mu$ is called a \textit{cycle based at} $v$ or simply a \textit{cycle}. A cycle of length $1$ will be said to be a \textit{loop}. Given a finite graph $G$, its \textit{adjacency matrix} is the matrix $M_{G}=(a_{ij})$ where $a_{ij}$ is the number of arrows from $i$ to $j$.

One can associated a graph to an evolution algebra. Let $A$ be an evolution algebra with natural basis $B=\{e_i \}_{i\in \Lambda}$ and structure matrix $M_B=(\omega_{ij})\in  {M}_\Lambda(\mathbb K)$. We consider the matrix
$M=(a_{ij})\in  {M}_\Lambda(\mathbb K)$ such that $a_{ij}=0$ if $\omega_{ij}=0$ and $a_{ij}=1$ if $\omega_{ij}\neq 0$. The directed graph whose adjacency matrix is given by $M=(a_{ij})$ is called the \textit{directed graph associated to the evolution algebra} $A$ (relative to the basis $B$). We denote it by $G_{B}$ (or simply by $G$ if the basis $B$ is understood) and its adjacency matrix by $M_{G_B}$. 
In this way, we consider directed graphs with at most one arrow between two vertices of $E_G^0$. In the literature this is known as a directed graph. A graph is \textit{strongly connected} if given two different vertices there exists a path that goes from the first to the second.

We recall some results about connectivity of directed graphs. 
Consider $\mathcal{B}=(\{0,1\},\lor,\land)$ the standard Boolean algebra ($\lor$ denotes logical OR, and $\land$ denotes logical AND). We denote by $\mathcal{B}^{(n\times n)}$ the set of all $n\times n$ Boolean matrices. Consider $\odot$ the Boolean matrix multiplication, that is, if $A,B\in \mathcal{B}^{(n\times n)}$ then $C=A\odot B\in \mathcal{B}^{(n\times n)}$ with $c_{ij}=\bigvee_{j=1}^{n}a_{ik}\land b_{kj}$. Let $A^0=I$ where $I$ denotes the identity matrix and let $A^i=A\odot A^{i-1}$ for $i >0$ integer. We define $\sigma_k(A)=A\lor A^2 \lor \ldots \lor A^k$ for any $A\in \B$. 
As the number of elements of $\B$ is finite, we have that $\sigma_j(A)=\sigma_i(A)$ for some $j>i$ and $j\leq n$. It is easy to check that if $\s_{k-1}(A)=\s_k(A)$ for some $k \in \N$, then $\s_k(A)=\s_{k+1}(A)$.

\begin{definition} \rm
 For a matrix $A\in \B$ the minimum $k$ such that $\s_k(A)=\s_{k+1}(A)$ will be called the {\it stabilizing index} of $A$. 
\end{definition}

\begin{remark}\label{remark:polinomio}\rm
We recall that if $A$ is a finite dimensional perfect evolution algebra and $B_1$ and $B_2$ are two natural bases of $A$. By \cite[Corollary 4.5]{ELduqueGraphs}) the graphs $G_{B_1}$ and $G_{B_2}$ are isomorphic. So, their adjacency matrices differ by a permutation of rows and columns. Then there exists a permutation matrix $Q$ such that $M_{G_{B_2}}=Q M_{G_{B_1}}Q^{-1}$, that is to say $M_{G_{B_1}}$ and $M_{G_{B_2}}$ are similar matrices.   Consequently, $M_{G_{B_1}}$ and $M_{G_{B_2}}$ have the same characteristic  and minimal polynomial. Note also that $Q^{-1}=Q^{t}$. 
\end{remark}

Let $G_1=(E_{G_1}^0,E_{G_1}^1,r_{G_1},s_{G_1})$ and $G_2=(E_{G_2}^0,E_{G_2}^1,r_{G_2},s_{G_2})$ be two directed graphs. We recall that the \textit{categorical product of $G_1$ and $G_2$} is the directed graph defined by $G_1 \times G_2 := (E_{G_1}^0 \times E_{G_2}^0, E_{G_1}^1 \times E_{G_2}^1, r, s)$ where $s(f,g) =(s(f), s(g))$ and $r(f,g)=(r(f), r(g))$ for any $(f,g)\in E_{G_1}^1 \times E_{G_2}^1$.

\begin{proposition}\label{prop:tensorialproduct}
Suppose $A_1, A_2$ evolution algebras with basis $B_1,B_2$ and directed associated graphs $G_{1},G_{2}$. Then the graph associated to the tensor evolution algebra, $G_{B_1\otimes B_2}$, coincides with the categorical product of $G_{1}$ and $G_{2}$, $G_1\times G_2$. Moreover, $M_{G_1\times G_2}=M_{G_1}\otimes M_{G_2}$.
\end{proposition}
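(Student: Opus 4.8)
The plan is to prove the two assertions separately but note they are essentially equivalent once the vertex and edge sets are identified correctly. First I would set up notation: let $B_1=\{e_i\}_{i\in\Lambda}$ and $B_2=\{f_j\}_{j\in\Gamma}$ be the natural bases, so that $B_1\otimes B_2=\{e_i\otimes f_j\}$ is a natural basis of $A_1\otimes A_2$, and recall from the earlier remark that the structure matrix of the tensor product in this basis is the Kronecker product $M_{B_1}\otimes M_{B_2}$. The vertex set of $G_{B_1\otimes B_2}$ is by definition indexed by $B_1\otimes B_2$, i.e. by $\Lambda\times\Gamma=E_{G_1}^0\times E_{G_2}^0$, which is exactly the vertex set of the categorical product $G_1\times G_2$. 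This takes care of the $0$-cells.

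\smallskip
Next I would check the arrows. In $G_{B_1\otimes B_2}$ there is an arrow from $(e_i\otimes f_j)$ to $(e_k\otimes f_t)$ precisely when the corresponding entry of the structure matrix is nonzero; since $(e_i\otimes f_j)^2 = e_i^2\otimes f_j^2 = \sum_{k,t}\omega_{ki}\beta_{tj}\,e_k\otimes f_t$, that entry is $\omega_{ki}\beta_{tj}$, which is nonzero if and only if $\omega_{ki}\ne 0$ and $\beta_{tj}\ne 0$, i.e. if and only if there is an arrow $e_i\to e_k$ in $G_1$ and an arrow $f_j\to f_t$ in $G_2$. By the definition of the categorical product — where $E^1_{G_1\times G_2}=E^1_{G_1}\times E^1_{G_2}$ with $s(f,g)=(s(f),s(g))$ and $r(f,g)=(r(f),r(g))$ — this is exactly the condition for an arrow $(i,j)\to(k,t)$ in $G_1\times G_2$. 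Here one should recall the standing convention in this section that all graphs associated to evolution algebras have at most one arrow between two given vertices, so the presence/absence of an arrow carries all the information and the bijection of arrow sets is forced. Hence $G_{B_1\otimes B_2}$ and $G_1\times G_2$ have the same vertices, the same arrows, and the same source and range maps, so they coincide.

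\smallskip
Finally, for the matrix identity $M_{G_1\times G_2}=M_{G_1}\otimes M_{G_2}$: the adjacency matrix $M_{G_B}$ of the graph associated to an evolution algebra is obtained from the structure matrix $M_B$ by replacing each nonzero entry by $1$ — equivalently, $M_{G_B}=(a_{k\ell})$ with $a_{k\ell}=1\iff \omega_{k\ell}\ne 0$. Since the $(k,t),(i,j)$ entry of $M_{B_1}\otimes M_{B_2}$ is $\omega_{ki}\beta_{tj}$, which vanishes iff $\omega_{ki}=0$ or $\beta_{tj}=0$, applying the ``nonzero $\mapsto 1$'' operation to $M_{B_1}\otimes M_{B_2}$ yields precisely the Kronecker product $M_{G_1}\otimes M_{G_2}$ of the two adjacency matrices (the Boolean Kronecker product agrees with the ordinary one on $0$--$1$ matrices). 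By the first part this matrix is the adjacency matrix of $G_1\times G_2$, giving the claim.

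\smallskip
I do not expect a serious obstacle here; the one point requiring a little care is the bijection on arrow sets. A priori the categorical product has $|E^1_{G_1}|\cdot|E^1_{G_2}|$ arrows, and one must use the ``at most one arrow between two vertices'' convention (in force throughout this section and inherited from the graph-to-evolution-algebra construction of \cite{ELduqueGraphs}) to see that an arrow of $G_1\times G_2$ is determined by its source and range, so that the combinatorial description above really does match the graph $G_{B_1\otimes B_2}$ on the nose rather than merely up to identifying parallel arrows.
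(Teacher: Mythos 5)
Your proof is correct and follows essentially the same route as the paper: compute $(e_i\otimes f_j)^2=\sum_{k,t}\omega_{ki}\beta_{tj}\,e_k\otimes f_t$ and observe that $\omega_{ki}\beta_{tj}\ne 0$ exactly when both factors are nonzero, which simultaneously identifies the arrows of $G_{B_1\otimes B_2}$ with those of the categorical product and the adjacency matrix with $M_{G_1}\otimes M_{G_2}$. Your added care about the vertex identification and the ``at most one arrow between two vertices'' convention is a welcome refinement of the same argument, not a different one.
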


\begin{proof}
Suppose $M_{G_1}=(c_{ik})$ the adjacency matrix of $G_{1}$ ($e_i^2=\sum c_{ik}e_k$), and $M_{G_2}=(d_{jl})$ the adjacency matrix of $G_{2}$ ($f_j^2=\sum d_{jl}f_l$). In order to construct the adjacency matrix of $G_1\times G_2$ we compute $(e_i\otimes f_j)^2=e_i^2\otimes f_j^2=\sum c_{ik} e_k\otimes \sum d_{jl}f_l=\sum (c_{ik} d_{jl})e_k\otimes f_l$. 

We know that: 
\begin{enumerate}
    \item $c_{ik}\neq 0$ if and only if there exists an edge from $e_i$ to $e_k$
    \item $d_{jl}\neq 0$ if and only if there exists an edge from $f_j$ to $f_l$
    \item $c_{ik} d_{jl}\neq 0$ if an only if $c_{ik}\neq 0$ and $d_{jl}\neq 0$
\end{enumerate}
By the definition of the categorical product of graphs this means that there exists an edge from $e_i\otimes f_j$ to $e_k\otimes f_l$. But, these edges are the same that we obtain computing $M_{G_1}\otimes M_{G_2}$. Hence $M_{G_1\times G_2}=M_{G_1}\otimes M_{G_2}$ and therefore $G_1\times G_2=G_{B_1\otimes B_2}$.
\end{proof}

The following result appears in \cite{McANDREW} and concerns strongly connected graphs involving the categorical product of graphs.  

\begin{theorem}[\cite{McANDREW}, Theorem 1.(ii)]\label{thm:strong}
Let $G_1$ and $G_2$ be strongly connected graphs. Let $$d_1=d(G_1)=gcd\{\text{\rm length of all the closed paths in } G_1\},$$
$$d_2=d(G_2)=gcd\{\text{\rm length of all the closed paths in } G_2\},$$
$$d_3=gcd\{d_1,d_2\}.$$
Then the number of connected components of $G_1\times G_2$ is $d_3$.
\end{theorem}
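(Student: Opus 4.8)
The plan is to prove the theorem through the cyclic (period) structure that Perron--Frobenius theory attaches to a strongly connected graph, reducing the connectivity question for the categorical product to a congruence condition modulo $d_3$. The first step I would record is the elementary translation built into the definition of $G_1\times G_2$: there is a path of length $n$ from $(u_1,u_2)$ to $(v_1,v_2)$ if and only if there is a path of length $n$ from $u_1$ to $v_1$ in $G_1$ \emph{and} a path of the same length $n$ from $u_2$ to $v_2$ in $G_2$. This is immediate by reading off the two coordinates of a product-path, and it makes the common length $n$ the crucial coupling between the factors.

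Next I would set up the period structure of each factor. For a strongly connected graph $G$ of period $d=d(G)$ (the $\gcd$ of its cycle lengths), fix a base vertex $v_0$ and define $\phi\colon E_G^0\to\bZ/d\bZ$ by letting $\phi(v)$ be the length modulo $d$ of any path from $v_0$ to $v$; this is well defined because any two such paths, closed up by a fixed path from $v$ back to $v_0$, produce closed paths whose lengths are multiples of $d$. Every edge $u\to v$ then satisfies $\phi(v)=\phi(u)+1$, and $\phi$ is surjective since all $d$ cyclic vertex-classes are nonempty. I would then establish, as the key lemma, the sharp counting statement: in a strongly connected graph of period $d$ a path of length $n$ from $u$ to $v$ exists for \emph{all} sufficiently large $n$ with $n\equiv\phi(v)-\phi(u)\pmod d$, and for \emph{no} $n$ of a different residue.

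Combining these, a product-path of length $n$ from $(u_1,u_2)$ to $(v_1,v_2)$ exists exactly when $n\equiv\phi_1(v_1)-\phi_1(u_1)\pmod{d_1}$ and $n\equiv\phi_2(v_2)-\phi_2(u_2)\pmod{d_2}$ with $n$ large. By Bézout/CRT such an $n$ exists if and only if these two target residues agree modulo $d_3=\gcd(d_1,d_2)$, i.e. $\phi_1(v_1)-\phi_2(v_2)\equiv\phi_1(u_1)-\phi_2(u_2)\pmod{d_3}$. I would therefore define $\psi(x_1,x_2)=\phi_1(x_1)-\phi_2(x_2)\bmod d_3$ and note that each product-edge leaves $\psi$ fixed (both coordinates contribute $+1$, cancelling mod $d_3$), so $\psi$ is constant on every (weakly or strongly) connected component; since $\psi$ is surjective onto $\bZ/d_3\bZ$, this yields at most $d_3$ components. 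For the matching lower bound I would show each level set of $\psi$ is a single component: two vertices with equal $\psi$-value satisfy the compatibility condition above, so CRT produces arbitrarily large common lengths $n$ realizing a path in both directions, putting them in the same strongly connected component. Hence there are exactly $d_3$ components.

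The main obstacle is the sharp path-length counting lemma for a single strongly connected graph---that beyond a threshold every admissible-residue length is realized---which underlies both the ``same length in both factors'' translation and the CRT step. The clean way to prove it is to analyze, for a fixed vertex, the numerical semigroup generated by the lengths of closed paths through that vertex, whose generated subgroup is $d\bZ$; once its eventual periodicity is in hand, the product statement follows by the arithmetic bookkeeping with $\psi$ described above.
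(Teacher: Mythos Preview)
The paper does not supply a proof of this statement: it is quoted as Theorem~1.(ii) of McAndrew~\cite{McANDREW} and used as a black box in the subsequent Corollary~\ref{cor:strong}, so there is nothing in the paper to compare against. Your argument is correct and is essentially the classical proof of McAndrew's result: the cyclic period decomposition of a strongly connected digraph gives the $\bZ/d_i\bZ$-valued labellings $\phi_i$, the numerical-semigroup (Schur--Frobenius) argument shows that all sufficiently large lengths in the admissible residue class are realized, and the Chinese Remainder step with $\psi=\phi_1-\phi_2\pmod{d_3}$ identifies the components. One small point worth making explicit in a write-up is that the gcd of the lengths of closed paths \emph{through a fixed vertex} coincides with the global period $d_i$ (this uses strong connectivity and is needed for the semigroup argument to yield period exactly $d_i$ rather than a multiple), but you have correctly flagged this lemma as the crux.
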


\begin{corollary}\label{cor:strong}
Let $A_i$ ($i=1,2$) be evolution algebras whose associated graphs are $G_i$ ($i=1,2$) respectively. Then we have:
\begin{enumerate}
\item If $G_1$ and $G_2$ are strongly connected graphs and have closed paths of coprime length, then $A_1\otimes A_2$ is nondegenerate and its associated graph is connected, so  $A_1\otimes A_2$ is irreducible.
\item If $A_1\otimes A_2$ is nondegenerate and irreducible, then each factor is nondegenerate and irreducible.
\end{enumerate}
\end{corollary}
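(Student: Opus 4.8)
The plan is to deduce (1) and (2) from the structural facts already assembled in the excerpt, principally Proposition \ref{prop:tensorialproduct} (which identifies $G_{B_1\otimes B_2}$ with the categorical product $G_1\times G_2$ and its adjacency matrix with $M_{G_1}\otimes M_{G_2}$), Theorem \ref{thm:strong} (McAndrew), Proposition \ref{rem:zuru} (nondegeneracy passes both ways through $\otimes$), and the link between irreducibility and connectivity of the associated graph.

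For part (1): if $G_1$ and $G_2$ are strongly connected, then both $A_1$ and $A_2$ are nondegenerate. Indeed, strong connectedness forces every vertex to be the source of some edge (given any other vertex there is a path out of it, hence at least one edge out of it), so no column of the structure matrix is zero, i.e. $e_i^2\ne 0$ for all $i$; thus each $A_i$ is nondegenerate, and by Proposition \ref{rem:zuru} so is $A_1\otimes A_2$. Next, since the closed paths of $G_1$ and of $G_2$ have coprime lengths, $\gcd\{d_1,d_2\}=1$ (with $d_1,d_2$ as in Theorem \ref{thm:strong}), so $d_3=1$ and by Theorem \ref{thm:strong} the categorical product $G_1\times G_2$ has exactly one connected component, i.e. $G_{B_1\otimes B_2}$ is connected. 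An evolution algebra whose associated graph is connected is irreducible (a nontrivial decomposition $A=A'\oplus A''$ as evolution algebras would split the natural basis into two blocks with no structural edges between them, disconnecting the graph), so $A_1\otimes A_2$ is irreducible.

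For part (2): assume $A_1\otimes A_2$ is nondegenerate and irreducible. Nondegeneracy of each factor is immediate from Proposition \ref{rem:zuru}. For irreducibility of the factors, argue by contraposition: if, say, $A_1=A_1'\oplus A_1''$ is a decomposition into nonzero evolution algebras, then choosing natural bases adapted to this decomposition and to any natural basis of $A_2$, one gets $A_1\otimes A_2=(A_1'\otimes A_2)\oplus(A_1''\otimes A_2)$, a decomposition into nonzero evolution subalgebras (nonzero because $A_2\ne 0$, and a tensor product of nonzero spaces is nonzero); hence $A_1\otimes A_2$ is reducible, a contradiction. Symmetrically for $A_2$. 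This uses only that the product on $A_1\otimes A_2$ restricts correctly to the tensor factors, which is Definition \ref{def:product}, together with the fact that natural bases of the summands assemble into a natural basis of the direct sum, which assembles (via $\otimes B_2$) into a natural basis of the whole.

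The main obstacle is really a matter of bookkeeping rather than a deep point: one must be careful about the exact meaning of ``irreducible'' versus ``graph is connected''. The cleanest route is to show directly that a reducibility decomposition of an evolution algebra corresponds exactly to a partition of (the vertices of) its associated graph into two nonempty sets with no edges across — equivalently, a block-diagonal form of the structure matrix after a permutation, which by Remark \ref{remark:polinomio}-type reasoning is an invariant notion for nondegenerate (indeed perfect) evolution algebras; for part (1) connectedness then rules this out, and for part (2) the induced block structure on $M_{G_1}\otimes M_{G_2}$ transfers back to a block structure on one of the $M_{G_i}$. Everything else is a routine check that the tensor decomposition is compatible with direct sums and with natural bases.
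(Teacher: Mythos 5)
Your part (1) follows the paper's proof almost line for line: coprimality of the closed path lengths gives $d_3=1$, Theorem \ref{thm:strong} plus Proposition \ref{prop:tensorialproduct} give connectedness of the graph of $A_1\otimes A_2$, strong connectivity gives nondegeneracy of each factor and Proposition \ref{rem:zuru} transfers it to the product, and connectedness of the graph yields irreducibility. The one caveat there is that the paper does not treat the step ``connected graph $\Rightarrow$ irreducible'' as obvious but cites \cite[Proposition 2.10]{ELduqueGraphs}; your parenthetical justification (that a direct-sum decomposition would split the natural basis into two blocks) is precisely the nontrivial content of that result, since a priori an ideal decomposition need not be adapted to a given natural basis, so you should cite it rather than re-derive it informally. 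Where you genuinely diverge is part (2). The paper stays inside the graph dictionary: product irreducible and nondegenerate $\Rightarrow$ product graph connected $\Rightarrow$ each factor graph connected (since a disconnection of $G_1$ disconnects $G_1\times G_2$) $\Rightarrow$ each factor irreducible, using the equivalence of \cite[Proposition 2.10]{ELduqueGraphs} in both directions. You instead argue purely algebraically by contraposition: a decomposition $A_1=A_1'\oplus A_1''$ into nonzero evolution algebras tensors up to $A_1\otimes A_2=(A_1'\otimes A_2)\oplus(A_1''\otimes A_2)$, with cross products vanishing because $(a'\otimes b)(a''\otimes c)=a'a''\otimes bc=0$, so reducibility of a factor forces reducibility of the product. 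This is correct, more elementary, and does not invoke the nondegeneracy hypothesis or the graph equivalence for that step; the paper's route has the advantage of uniformity with part (1). Both arguments are valid.
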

\begin{proof}

Since, $G_1$ and $G_2$ are strongly connected graphs,  $d_3=1$. Hence, the categorical product $G_1\times G_2$ has only one connected component, so it is connected.
It is clear that  the evolution algebra associated to a strongly connected graph is nondegenerate. So, $A_1$ and $A_2$ are nondegenerate evolution algebras. By Proposition \ref{rem:zuru}, $A_1\otimes A_2$ is nondegenerate
and by Proposition \ref{prop:tensorialproduct}, its associated graph is connected. 
But this  is equivalent to say that $A_1\otimes A_2$ is irreducible (see \cite[Proposition 2.10]{ELduqueGraphs}). 
For the second part of the Corollary we know that each factor is nondegenerate by Proposition \ref{rem:zuru}. Moreover, since the product graph is connected, then each factor graph has to be connected whence the corresponding algebra is irreducible by \cite[Proposition 2.10]{ELduqueGraphs}.
\end{proof}

\section{Perfect and simple tensor product of evolution algebras}

In this section we continue researching how some properties 
pass from the factors to the tensor product and conversely from the tensor product to the factors under suitable conditions. For instance, when the tensor product is an evolution algebra, then, under certain conditions on one of the factors, the other is an evolution algebra.

\begin{proposition}\label{proposition:uf}
Let $A_1$ and $A_2$ be $\K$-algebras and assume that $A_1\otimes A_2$ is an evolution algebra and $A_2$ has an ideal $J$ of codimension $1$. Then either $A_1$ is an evolution algebra or $A_2^2\subset J$.
\end{proposition}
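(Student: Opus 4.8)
The plan is to exploit the decomposition $A_2 = J \oplus \K v$ (as vector spaces), where $v \in A_2 \setminus J$, and to transport a natural basis of $A_1 \otimes A_2$ into a form adapted to this splitting. Since $J$ has codimension $1$, we may write $A_2^2 \subset J$ or not; assuming $A_2^2 \not\subset J$, I must produce a natural basis of $A_1$, i.e. show $A_1$ is an evolution algebra. First I would fix a natural basis $\{u_k \otimes 1 \,\}$ — more precisely, let $\{g_1,\dots,g_N\}$ be a natural basis of $A_1\otimes A_2$, so $g_s g_t = 0$ for $s\neq t$ and the $g_s^2$ span $(A_1\otimes A_2)^2$ (after discarding the degenerate ones). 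The idea is that the linear functional $\pi\colon A_2 \to A_2/J \cong \K$ induces $\mathrm{id}\otimes \pi \colon A_1\otimes A_2 \to A_1$, and I want to show its restriction to a suitable subspace carries the natural basis of $A_1\otimes A_2$ to a natural basis of $A_1$.

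The key steps, in order: (1) Write $\phi = \mathrm{id}_{A_1}\otimes\pi\colon A_1\otimes A_2 \twoheadrightarrow A_1$, a surjective linear map; note it is \emph{not} an algebra homomorphism in general, but observe that for $x,y \in A_1$ and $a,b \in A_2$ one has $\phi\big((x\otimes a)(y\otimes b)\big) = \pi(ab)\, xy$, so $\phi$ does respect products up to the scalar $\pi(ab)$. (2) Pick $v\in A_2$ with $\pi(v)=1$; since $A_2^2\not\subset J$, choose $a,b\in A_2$ with $\pi(ab)\neq 0$, and after rescaling assume $\pi(ab)=1$. (3) Take a natural basis $B=\{g_s\}$ of $A_1\otimes A_2$; the products $g_s g_t$ vanish off the diagonal, and I claim the images $\{\phi(g_s)\}$, after removing those that are zero or yield $\phi(g_s^2)=0$, furnish a spanning set of $A_1$ whose pairwise products "off the diagonal'' vanish. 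The crux is to get honest \emph{linear independence} and the orthogonality $\phi(g_s)\phi(g_t)=0$ for $s\neq t$; for the latter, write each $g_s = \sum_i x_i^{(s)}\otimes a_i^{(s)}$ and expand $0 = g_s g_t = \sum x_i^{(s)}x_j^{(t)} \otimes a_i^{(s)}a_j^{(t)}$, then apply $\phi$. (4) Conclude that $A_1$ has a natural basis, hence is an evolution algebra — or, if the independence fails, trace back that the only obstruction forces $A_2^2\subset J$, giving the dichotomy.

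I expect the main obstacle to be step (3): controlling what $\phi$ does to a natural basis of the tensor product when that basis is \emph{not} of the form $\{e_i\otimes f_j\}$ (indeed the whole point is that $A_1, A_2$ need not be evolution algebras individually). The cleanest route is probably to argue by contradiction: suppose $A_1$ is not an evolution algebra and $A_2^2\not\subset J$, fix the natural basis $B=\{g_s\}$ of $A_1\otimes A_2$, and use the relation $(A_1\otimes A_2)^2 = A_1^2 \otimes A_2^2$ together with the presence of an element $w\in A_2^2$ with $\pi(w)\neq 0$ to manufacture, from $B$, a natural basis of $A_1$ — since $A_1^2 \otimes \K w \subset A_1^2\otimes A_2^2 = (A_1\otimes A_2)^2$ and $\phi$ restricted to $A_1^2\otimes \K w$ is a linear isomorphism onto $A_1^2$ that intertwines the algebra structures up to the fixed scalar $\pi(w)$. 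The delicate point is that a natural basis of $A_1\otimes A_2$ need not be contained in, or even aligned with, $(A_1\otimes A_2)^2 = A_1^2\otimes A_2^2$, so one has to handle the annihilator part separately, presumably invoking Lemma \ref{ann} to split off $\ann(A_1\otimes A_2) = A_1\otimes\ann(A_2) + \ann(A_1)\otimes A_2$ and reduce to the perfect part where $\phi$ behaves well.
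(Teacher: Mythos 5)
Your overall idea---push the natural basis of $A_1\otimes A_2$ down to $A_1$ via $\mathrm{id}_{A_1}\otimes\pi$ with $\ker\pi=J$---is the right one, but as written there is a genuine gap, and it sits exactly where you locate the "crux". You insist that $\phi=\mathrm{id}_{A_1}\otimes\pi$ is \emph{not} an algebra homomorphism and then try to recover the orthogonality of the images of a natural basis by expanding $0=g_sg_t$ and applying $\phi$. But applying $\phi$ to $g_sg_t=\sum_{i,j}x_i^{(s)}x_j^{(t)}\otimes a_i^{(s)}a_j^{(t)}$ produces $\sum_{i,j}\pi\bigl(a_i^{(s)}a_j^{(t)}\bigr)\,x_i^{(s)}x_j^{(t)}$, which is \emph{not} $\phi(g_s)\phi(g_t)=\sum_{i,j}\pi\bigl(a_i^{(s)}\bigr)\pi\bigl(a_j^{(t)}\bigr)\,x_i^{(s)}x_j^{(t)}$ unless $\pi$ is multiplicative. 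So the vanishing of $g_sg_t$ does not transfer to the images, and your fallback (restricting to $A_1^2\otimes\K w$ and splitting off the annihilator via Lemma \ref{ann}) is not carried out and would still only control $A_1^2$, not $A_1$, since $A_1$ need not split as $A_1^2$ plus its annihilator.

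The missing observation---which is the whole point of the hypothesis $A_2^2\not\subset J$---is that you \emph{can} choose $\pi$ to be multiplicative. The quotient $A_2/J$ is a one-dimensional algebra, and it has nonzero product precisely because $A_2^2\not\subset J$; a one-dimensional algebra with nonzero product is isomorphic to $\K$ (rescale a generator to an idempotent). Hence the canonical projection composed with this isomorphism gives an algebra epimorphism $\phi\colon A_2\to\K$, and $\Omega=\mathrm{id}_{A_1}\otimes\phi\colon A_1\otimes A_2\to A_1$ is then an algebra epimorphism ($\Omega(a\otimes b)=\phi(b)a$, surjective because $a=\Omega(a\otimes b_0)$ for any $b_0$ with $\phi(b_0)=1$). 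This is exactly the paper's proof. At that point everything you were struggling with in step (3) evaporates: if $\{g_s\}$ is a natural basis of $A_1\otimes A_2$, the images $\Omega(g_s)$ span $A_1$ and satisfy $\Omega(g_s)\Omega(g_t)=\Omega(g_sg_t)=0$ for $s\ne t$, and extracting a linearly independent subset from this orthogonal spanning set yields a natural basis of $A_1$ --- i.e.\ the epimorphic image of an evolution algebra is an evolution algebra. No appeal to $(A_1\otimes A_2)^2=A_1^2\otimes A_2^2$ or to Lemma \ref{ann} is needed.
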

\begin{proof}
Assume that $A_2^2\not\subset J$, then  there is an algebra epimorphism
$\phi\colon A_2\to\K\cong A_2/J$. Take $b_0\in A_2$ such that $\phi(b_0)=1$.
Define the $\K$-algebra homomorphism $\Omega\colon A_1\otimes A_2\to A_1$ such that
$\Omega(a\otimes b)=\phi(b)a$. For any $a\in A_1$ we have $a=\Omega(a\otimes b_0)$
hence $\Omega$ is an epimorphism. But any epimorphic image of an evolution algebra is an evolution algebra whence $A_1$ is an evolution algebra. Note that when $A_2^2\subset J$, then $\Omega$ is not an algebra homomorphism.
\end{proof}

\begin{remark}\rm\label{remark:application}
To illustrate some application of Proposition \ref{proposition:uf} and 
Proposition \ref{perfect}, assume
that $\dim(A_1)=\dim(A_2)=2$ and $A_1\otimes A_2$ is an evolution
algebra. Then we have the following possibilities:
\begin{enumerate}
\item Both $A_1$ and $A_2$ are simple. 
\item One of them, say $A_2$ has a one-dimensional ideal $J$. If $A_2^2\subset J$
then $A_2$ is not perfect hence $A_1\otimes A_2$ is not perfect by Proposition \ref{perfect}. 
\item One of them, say $A_2$ has a one-dimensional ideal $J$ but $A_2^2\not\subset J$. In this case $A_1$ is an evolution algebra by Proposition \ref{proposition:uf}.
\end{enumerate}
\end{remark}
As a corollary we have 
\begin{corollary}
If the tensor product $A_1\otimes A_2$ of the $\K$-algebras $A_1$ and $A_2$ is a
perfect evolution algebra and $A_2$ has an ideal of codimension $1$, then $A_1$
is an evolution algebra. In particular if $\dim(A_1)=\dim(A_2)=2$ we conclude that
when $A_1\otimes A_2$ is a perfect evolution then either $A_1$ and $A_2$ are simple or
one of them is an evolution algebra.
\end{corollary}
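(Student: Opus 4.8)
The plan is to derive this as a straightforward consequence of Proposition \ref{proposition:uf} and Proposition \ref{perfect}. First I would invoke Proposition \ref{perfect}: since $A_1\otimes A_2$ is a perfect algebra, both $A_1$ and $A_2$ are perfect; in particular $A_2^2=A_2$. Now, by hypothesis $A_2$ has an ideal $J$ of codimension $1$, so $J\ne A_2$, and since $A_2^2=A_2\not\subset J$, the dichotomy in Proposition \ref{proposition:uf} forces the first alternative: $A_1$ is an evolution algebra. This gives the general statement.

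For the particular case $\dim(A_1)=\dim(A_2)=2$, I would argue as follows. Since $A_1\otimes A_2$ is a perfect evolution algebra, by Proposition \ref{perfect} both $A_1$ and $A_2$ are perfect. If neither $A_1$ nor $A_2$ is simple, then since each has dimension $2$ and is perfect (hence $A_i^2\ne 0$), a non-simple such algebra must possess a proper nonzero ideal, which is necessarily one-dimensional, i.e. of codimension $1$. Say $A_2$ has such an ideal $J$ with $\dim(J)=1$. As above, perfectness of $A_2$ gives $A_2^2=A_2\not\subset J$, so Proposition \ref{proposition:uf} yields that $A_1$ is an evolution algebra. Thus in the two-dimensional case, either one of the factors is simple — and by Proposition \ref{laqeme} together with the analysis preceding Proposition \ref{simplevo} this leads to the listed possibilities — or one of them is an evolution algebra; actually the cleanest phrasing, matching the corollary statement, is simply: either $A_1$ and $A_2$ are both simple, or one of them is an evolution algebra.

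I do not anticipate a genuine obstacle here, since all the work has been done in the two cited results; the only point requiring a line of care is the observation that a $2$-dimensional perfect non-simple algebra automatically has a codimension-$1$ ideal (a nonzero proper ideal in dimension $2$ has dimension $1$), and the symmetric roles of $A_1$ and $A_2$, so that ``$A_2$ has an ideal of codimension $1$'' in the first part can be replaced by ``one of the factors does'' in the $\dim=2$ specialization. One should also note explicitly that perfectness rules out the degenerate alternative $A_2^2\subset J$ appearing in Proposition \ref{proposition:uf}, which is precisely why the conclusion is unconditional here.
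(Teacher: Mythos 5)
Your proposal is correct and follows essentially the same route as the paper: Proposition \ref{perfect} gives perfectness of the factors, which rules out the alternative $A_2^2\subset J$ in Proposition \ref{proposition:uf}, and the two-dimensional case reduces to the observation that a perfect non-simple $2$-dimensional algebra has a one-dimensional (hence codimension-one) ideal. The only cosmetic difference is that the paper phrases the second part as a contradiction via Remark \ref{remark:application}, whereas you argue directly; the content is identical.
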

\begin{proof}
The first part of the corollary is a consequence of Proposition \ref{perfect}  and Proposition \ref{proposition:uf}. For the second, if $A_1$ or $A_2$ is not a simple
algebra, without loss of generality we assume that $A_2$ is not simple.
Then $A_2$ has a one-dimensional ideal $J$. If $A_2^2\subset J$, then 
$A_2$ is not perfect implying that $A_1\otimes A_2$ is not perfect by Remark \ref{remark:application} case $(2)$,
a contradiction. Hence $A_2^2\not\subset J$ which implies that $A_1$ is an 
evolution algebra by Remark \ref{remark:application} case $(3)$.
\end{proof}

Consequently, in our quest of tensorially decomposable evolution algebras of dimension $4$, if we focus first on the perfect ones, we know that both factor must be simple, or one of them must be an evolution algebra. So the above results control the amount of cases to be considered.

Now we focus on $2-$dimensional evolution algebras, $A_1$ and $A_2$. First, we prove that not every perfect evolution algebra of dimension $4$ comes from the tensor product of two perfect evolution algebras of dimension $2$ given a concrete example. We need several formulas concerning the number of zeros in the structure matrices.

\begin{remark}\label{remark:zeros}\rm
If $A$ is an $n\times n$ matrix with coefficients in $\K$ we will denote by $z(A)$ the number of zeros in $A$, by  $z_d(A)$ the number of zeros in the diagonal of $A$, by $z_c(A)$ the number of columns filled with zeros of $A$ by $z_r(A)$ the number of rows filled with zeros of $A$ and by $\ran(A)$ the rank of the matrix $A$.
Then if $A$ is $n\times n$ and $B$ is $m\times m$ (both with coefficients in $\K$) we have
\begin{eqnarray}\label{compruebese}
z(A\otimes B)=z(A)m^2+(n^2-z(A))z(B), \label{compruebese1}\\
z_d(A\otimes B)=z_d(A)m+(n-z_d(A))z_d(B),\label{compruebese2}\\
z_c(A\otimes B)=z_c(A)m+(n-z_c(A))z_c(B), 
\label{compruebese3} \\
z_r(A\otimes B)=z_r(A)m+(n-z_r(A))z_r(B). \label{compruebese4}
\end{eqnarray}

Observe that \eqref{compruebese4} is consequence of \eqref{compruebese3} and $(A\otimes B)^t=A^t \otimes B^t$. Now, if $A$ and $B$ are $2 \times 2$ matrices, then $A \otimes B$ cannot have exactly one column with zero entries because the equation $2x + (2-x)y=1$ has no solution in natural numbers. Moreover, since $\ran(A\otimes B)=\ran(A)\ran(B)$ we have that it is not possible that $A \otimes B$ has rank $3$.
In addition, calling $\alpha=z(A\otimes B)$, $k=z_d(A\otimes B)$, $x=z(A)$, $y=z(B)$, $x'=z_d(A)$, $y'=z_d(B)$, we have these possibilities:
\begin{table}[H]\scriptsize
\begin{center}
\begin{tabular}{|c | l |  l|}
\hline
$\boldsymbol{z(A\otimes B)=\alpha}$ & $\boldsymbol{(z(A),z(B))=(x,y)}$ & $\boldsymbol{(z_d(A),z_d(B))=(x',y')\longrightarrow z_d(A\otimes B)=k}$  \\
\hline
$\alpha=0$ & $(x,y)=(0,0)$   & \begin{tabular}{l} 
$(x',y')=(0,0)$  $\longrightarrow k=0$
 \end{tabular}\\
\hline
$\alpha=4$ & $(x,y)=(0,1)$ &
\begin{tabular}{l} 
$(x',y')=(0,0) \longrightarrow k=0 $\\
 $(x',y')=(0,1) \longrightarrow k=2$
 \end{tabular}\\
\hline
$\alpha=7$ & $(x,y)=(1,1)$ & 
\begin{tabular}{l} 
$(x',y')=(0,0) \longrightarrow k=0 $\\
 $(x',y')=(0,1) \longrightarrow k=2$\\
 $(x',y')=(1,1) \longrightarrow k=3$ 
 \end{tabular}\\
\hline
$\alpha=8$ & $(x,y)=(0,2)$  & 
\begin{tabular}{l} 
$(x',y')=(0,0) \longrightarrow k=0$ \\
 $(x',y')=(0,1) \longrightarrow k=2$\\
 $(x',y')=(0,2) \longrightarrow k=4$ 
 \end{tabular}\\
\hline
$\alpha=10$ & $(x,y)=(1,2)$ & 
\begin{tabular}{l} 
$(x',y')=(0,0) \longrightarrow k=0$ \\
 $(x',y')=(0,1) \longrightarrow k=2$\\
 $(x',y')=(1,1) \longrightarrow k=3$\\
 $(x',y')=(0,2),(1,2) \longrightarrow k=4$\\
 \end{tabular}\\
\hline
$\alpha=12$  & $(x,y)=(0,3)$  & 
\begin{tabular}{l} 
 $(x',y')=(0,1) \longrightarrow k=2$\\
 $(x',y')=(0,2) \longrightarrow k=4$
 \end{tabular}\\
\hline
$\alpha=12$ & $(x,y)=(2,2)$  & 
\begin{tabular}{l}
$(x',y')=(0,0) \longrightarrow k=0$\\
$(x',y')=(0,1) \longrightarrow k=2$\\
$(x',y')=(1,1) \longrightarrow k=3$\\
$(x',y')=(0,2),(1,2),(2,2) \longrightarrow k=4$\\
\end{tabular}\\
\hline
$\alpha=13$ & $(x,y)=(1,3)$ & 
\begin{tabular}{l} 
$(x',y')=(0,1) \longrightarrow k=2$ \\
 $(x',y')=(1,1) \longrightarrow k=3$\\
 $(x',y')=(0,2),(1,2) \longrightarrow k=4$\end{tabular}\\
 \hline
$\alpha=14$& $(x,y)=(2,3)$ & 
\begin{tabular}{l} 
$(x',y')=(0,1) \longrightarrow k=2 $\\
 $(x',y')=(1,1) \longrightarrow k=3$\\
 $(x',y')=(0,2), (1,2), (2,2) \longrightarrow k=4$\\
 \end{tabular}\\
 \hline
$\alpha=15$& $(x,y)=(3,3)$ & 
\begin{tabular}{l} $(x',y')=(1,1) \longrightarrow k=3$ \\
 $(x',y')=(1,2),(2,2) \longrightarrow k=4$\\
\end{tabular}\\
\hline
\end{tabular}  
\caption{\footnotesize All possible solutions for \eqref{compruebese2} depending on the number of zeros}
\label{tab:otrazeros2}
\end{center}
\end{table}

In conclusion, a $4 \times 4$ matrix $M$ is tensorially indecomposable if $z(M)\in\{1,2,3,5,6,9,11\}$ or $\ran(M)=3$ or $z_c(M)=1$ or $z_r(M)=1$ or $z_d(M)=1$.

\end{remark}

\begin{remark}\rm 
If $A$ is a $n$-dimensional perfect evolution algebra with structure matrix $M$  then, on the one hand we have that by \cite[Proposition 2.13]{CKS} the unique change of basis matrices are induced by the elements of the semidirect product $S_n \rtimes (\K ^\times)^n $. We define an action of this product on the set of the  matrices of order $n$ (see \cite[section $4.1$]{YolandaTesis}). In this context we can speak of the orbit of an structure matrix. On the other hand, also by \cite[Proposition 2.13]{CKS}, we know that $z(M)$, $z_d(M)$ and $\ran(M)$ are invariants for all the elements on the orbit. Moreover, $z_c(M)$ (dimension of annihilator) and $z_r(M)$ are also invariants. Furthermore, if we consider the number of zero entries in a chosen column $i$ we define $z_c^i(M)$ as the number of zero entries in the $i^{th}$ column and similarly we define $z^j_r(M)$ as the number of zero entries in the $j^{th}$ row. Then, for every natural basis $B'$ there exists a permutation $\sigma \in S_n$ such that $z_c^i(M_B)=z_c^{\sigma(i)}(M_{B'})$. Note that $M_{B'}=(M_\sigma P)^{-1}M_B M_\sigma P^2$ where $M_\sigma$ is the permutation matrix relative to $\sigma$ and $P$ is a invertible diagonal matrix. So, the previous remark gives us a method to discard all the matrices on an orbit that do not come from the Kronecker product of two matrices by counting the number of zeros or computing the rank of one of them.
\end{remark}

\begin{example}\rm 
If $A$ is the perfect evolution algebra with structure matrix 
$M=\tiny\begin{pmatrix}
1 & 1 & 1 & 1\\
0 & 1 & 1 & 1\\
0 & 0 & 1 & 1\\
0 & 0 & 0 & 1
\end{pmatrix}$, then $A$ is tensorially indecomposable since $z(M)=6$ (by using the conclusion given in Remark \ref{remark:zeros}).
\end{example}

\begin{example}\rm \label{ej:ceros}
Now, consider the perfect evolution algebra $A$ with structure matrix
$N=\tiny\begin{pmatrix}
1 & 1 & 1 & 1\\
0 & 1 & 1 & 1\\
0 & 0 & 1 & 0\\
0 & 0 & 0 & 1
\end{pmatrix}$.
Following Table \ref{tab:otrazeros2} we may be in one of the cases, since $z(N)=7$ and $z_d(N)=0$, nevertheless $N$ is tensorially indecomposable as we will see in Example \ref{ex:ext}.

\end{example}

\begin{remark}\label{remark:extendida}\rm
Now, we describe another well-known tool to check if a matrix is tensorially decomposable. 
We define the map $\omega: \mathcal{M}_m(\K)\to \K^{m^2}$ such that, if  $M=(a_{ij})_{i,j=1}^{m}$, then\\ $$\omega(M)=(a_{11},a_{12},\ldots,a_{1m}, a_{21},\ldots,a_{2m},\ldots,a_{mm}).$$
Consider a general matrix $M$ that comes from the Kronecker product of two matrices of order $n$ and $k$ respectively ($n,k > 1$). That is,
$M=\tiny\begin{pmatrix}
V_{11} & \ldots & V_{1k}\\
\vdots & \ddots & \vdots\\
V_{k1} & \ldots & V_{kk}
\end{pmatrix}$
with $V_{ij}\in \mathcal{M}_n(\K)$ and $M\in \mathcal{M}_{nk}(\K)$.
We now construct a matrix whose rows are the extended blocks that appear in the Kronecker product matrix. Let us call it the \textit{extended matrix} of $M$ and denote by $\ex(M)$ the following matrix
\begin{equation}\label{extended}
\ex(M)=\tiny\begin{pmatrix}
\omega(V_{11})\\
\omega(V_{12})\\
\vdots\\
\omega(V_{1k})\\
\vdots\\
\omega(V_{kk})
\end{pmatrix}.
\end{equation}
Observe that a matrix $N$ with order $nk$ and $n, k >1$ is tensorially decomposable into two matrices of order $n$ and $k$ respectively if and only if $\ran(\ex(N))=1$.

Let us consider a finite dimensional perfect evolution algebra with structure matrix $M_B$ relative to a natural basis $B$. It is easy to check that there exist examples such that $\ran(\ex (M_B))=1$ but $\ran(\ex (M_{B'})) \neq 1$ where $B'$ is another natural basis.

\end{remark}

\begin{example}\label{ex:ext}\rm 
If we apply Remark \ref{remark:extendida} to the structure matrix $N$ that appears in Example \ref{ej:ceros} one can check that the rank of all the extended matrices in the orbit of $N$ is $3$. Hence, $N$ does not come from the Kronecker product of two $2\times 2$ matrices.
\end{example}

Let us give all the possible tensor product of two $2-$dimensional perfect evolution algebras regarding the number of zeros in the corresponding structure matrices $M_1$ and $M_2$. It is known that the perfect simple evolution algebras of dimension $2$ have one of the following structure matrix:
$\tiny\begin{pmatrix}
1 & a\\
b & 1
\end{pmatrix}$ with $1-ab \neq 0$, $\tiny\begin{pmatrix}
0 & c\\
1 & 0
\end{pmatrix}$ with $c \neq 0$ and
$\tiny\begin{pmatrix}
0 & 1\\
d & 1
\end{pmatrix}$ with $d \neq 0$ and $a, b, c, d \in \K$ (see \cite[Lemma 3.1]{squares}).

Now, attending to these types of possible structure matrices  for $2-$dimensional perfect evolution algebras, we can distribute the tensor product $M=M_1\otimes M_2$  according to   Table \ref{tab:productos} to distinguish the different family types and  we will look for a complete system of invariants that will be formed by some of the following elements: $z(M)$, $z_d(M)$, simplicity of the tensor product, $p_c(M)$ and $p_m(M)$, where $p_c(M)$ and $p_m(M)$ denote the characteristic and minimal polynomials of $M$ respectively.

We have applied the $\sigma_k$-criterion to determine the case in which $A_1\otimes A_2$ is simple: it turns out that they are  family types I-V, VII, XI appearing in Table \ref{tab:productos}.

\begin{table}\tiny
\begin{center}
\setlength\tabcolsep{1.5pt} 
\begin{tabular}{|c||c|c | c |  c|| c|c | c|c|}
\hline
$\begin{matrix}
\textbf{Family}\\
\textbf{Type}
\end{matrix}$&$\boldsymbol{z(M_1)}$ & $\boldsymbol{z(M_2)}$ & $\boldsymbol{M_1\otimes M_2=M}$ & \textbf{Conditions} & \textbf{Simple} & $\boldsymbol{z(M)}$ &  $\begin{cases}\boldsymbol{p_c(M)}\\ \boldsymbol{p_m(M)} \end{cases}$ \\
\hline
I&0 & 0 & $\tiny\begin{pmatrix}
1 & a\\ b&1
\end{pmatrix}\otimes \tiny\begin{pmatrix}
1 & c\\ d&1
\end{pmatrix}=\tiny\begin{pmatrix}
1 & c & a & ac\\
d & 1 & ad & a\\
b & bc & 1 & c\\
bd & b & d & 1
\end{pmatrix}$ & $\begin{matrix}
ab\neq 1, cd\neq 1\\
abcd\neq 0
\end{matrix}$  & Yes & 0 & $\begin{cases} \lambda^3(\lambda-4)\\ \lambda(\lambda-4)\end{cases}$\\
\hline
II&1 & 0 & $\tiny\begin{pmatrix}
0 & 1\\ b & 1
\end{pmatrix}\otimes \tiny\begin{pmatrix}
1 & c\\ d & 1
\end{pmatrix}=\tiny\begin{pmatrix}
0 & 0 & 1 & c\\
0 & 0 & d & 1\\
b & bc & 1 & c\\
bd & b & d & 1
\end{pmatrix}$ & $\begin{matrix}bcd\neq 0, cd\neq 1  \end{matrix}$
& Yes & 4 &$\begin{cases} \lambda^2 (\lambda^2 -2 \lambda -4)\\ \lambda(\lambda^2-2\lambda-4)\end{cases}$\\
\hline
III&1 & 0 & $\tiny\begin{pmatrix}
1 & 0\\ b & 1
\end{pmatrix}\otimes \tiny\begin{pmatrix}
1 & c\\ d & 1
\end{pmatrix}=\tiny\begin{pmatrix}
1 & c & 0 & 0\\
d & 1 & 0 & 0\\
b & bc & 1 & c\\
bd & b & d & 1
\end{pmatrix}$ &  $\begin{matrix}cd\neq 1, bcd\ne0 \end{matrix}$& Yes & 4 & $\begin{cases} \lambda ^2 (\lambda-2)^2\\ \lambda(\lambda-2)^2\end{cases}$\\
\hline
IV&1 & 1 & $\tiny\begin{pmatrix}
0 & 1\\ b & 1
\end{pmatrix}\otimes \tiny\begin{pmatrix}
0 & 1\\ d & 1
\end{pmatrix}=\tiny\begin{pmatrix}
0 & 0 & 0 & 1\\
0 & 0 & d & 1\\
0 & b & 0 & 1\\
bd & b & d & 1
\end{pmatrix}$ &   $bd\ne0$ & Yes & 7 & $\begin{cases}(\lambda+1)^2(\lambda^2-3\lambda+1)\\(\lambda+1)(\lambda^2-3\lambda+1)\end{cases}$  \\
\hline
V&1 & 1 & $\tiny\begin{pmatrix}
0 & 1\\ b & 1
\end{pmatrix}\otimes \tiny\begin{pmatrix}
1 & 0\\ d & 1
\end{pmatrix}=\tiny\begin{pmatrix}
0 & 0 & 1 & 0\\
0 & 0 & d & a\\
b & 0 & 1 & 0\\
bd & b & d & 1
\end{pmatrix}$ &  $bd\ne0$ & Yes & 7 & $\begin{cases} (\lambda^2-\lambda -1)^2\\  (\lambda^2-\lambda -1)^2 \end{cases}$\\
\hline
VI&1 & 1 & $\tiny\begin{pmatrix}
1 & 0\\ b & 1
\end{pmatrix}\otimes \tiny\begin{pmatrix}
1 & 0\\ d & 1
\end{pmatrix}=\tiny\begin{pmatrix}
1 & 0 & 0 & 0\\
d & 1 & 0 & 0\\
b & 0 & 1 & 0\\
bd & b & d & 1
\end{pmatrix}$ &   $bd\ne0$ & No & 7 & $\begin{cases}(\lambda-1)^4\\ (\lambda-1)^3\end{cases}$\\
\hline
VII&2 & 0 & $\tiny\begin{pmatrix}
0 & a\\ 1 & 0
\end{pmatrix}\otimes \tiny\begin{pmatrix}
1 & c\\ d & 1
\end{pmatrix}=\tiny\begin{pmatrix}
0 & 0 & a & ac\\
0 & 0 & ad & a\\
1 & c & 0 & 0\\
d & 1 & 0 & 0
\end{pmatrix}$ &$\begin{matrix}
acd\neq 0, cd\neq 1\\
\end{matrix}$  & Yes & 8 & $\begin{cases}\lambda^2(\lambda+2)(\lambda-2)\\ \lambda(\lambda+2)(\lambda-2)\end{cases}$\\
\hline
VIII&2 & 0 & $\tiny\begin{pmatrix}
1 & 0\\ 0 & 1
\end{pmatrix}\otimes \tiny\begin{pmatrix}
1 & c\\ d&1
\end{pmatrix}=\tiny\begin{pmatrix}
1 & c & 0 & 0\\
d & 1 & 0 & 0\\
0 & 0 & 1 & c\\
0 & 0 & d & 1
\end{pmatrix}$ &   $cd\ne 0, cd\ne1$ & No & 8 & $\begin{cases}
\lambda^2(\lambda-2)^2\\
\lambda(\lambda-2)
\end{cases}$ \\
\hline
IX&2 & 1 & $\tiny\begin{pmatrix}
1 & 0\\ 0&1
\end{pmatrix}\otimes \tiny\begin{pmatrix}
0 & 1\\ d & 1
\end{pmatrix}=\tiny\begin{pmatrix}
0 & 1 & 0 & 0\\
d & 1 & 0 & 0\\
0 & 0 & 0 & 1\\
0 & 0 & d & 1
\end{pmatrix}$ & $d\ne0$& No & 10 &  $\begin{cases} (\lambda^2-\lambda-1)^2\\  (\lambda^2-\lambda-1)\end{cases}$\\
\hline
X&2 & 1 & $\tiny\begin{pmatrix}
1 & 0\\ 0&1
\end{pmatrix}\otimes \tiny\begin{pmatrix}
1 & 0\\ d & 1
\end{pmatrix}=\tiny\begin{pmatrix}
1 & 0 & 0 & 0\\
d & 1 & 0 & 0\\
0 & 0 & 1 & 0\\
0 & 0 & d & 1
\end{pmatrix}$ & $ d\neq 0$ & No & 10 &$\begin{cases} (\lambda -1)^4\\(\lambda -1)^2\end{cases}$\\
\hline
XI&2 & 1 & $\tiny\begin{pmatrix}
0 & a\\ 1&0
\end{pmatrix}\otimes \tiny\begin{pmatrix}
0 & 1\\ d & 1
\end{pmatrix}=\tiny\begin{pmatrix}
0 & 0 & 0 & a\\
0 & 0 & ad & a\\
0 & 1 & 0 & 0\\
d & 1 & 0 & 0
\end{pmatrix}$ & $ ad\ne 0$ & Yes & 10 &$\begin{cases} (\lambda^2-\lambda-1)(\lambda^2+\lambda-1)\\ (\lambda^2-\lambda-1)(\lambda^2+\lambda-1)\end{cases}$\\
\hline
XII&2 & 1 & $\tiny\begin{pmatrix}
0 & a\\ 1&0
\end{pmatrix}\otimes \tiny\begin{pmatrix}
1 & 0\\ d & 1
\end{pmatrix}=\tiny\begin{pmatrix}
0 & 0 & a & 0\\
0 & 0 & ad & a\\
1 & 0 & 0 & 0\\
d & 1 & 0 & 0
\end{pmatrix}$ & $ a d\ne 0$ & No & 10 & $\begin{cases}(\lambda-1)^2(\lambda +1)^2\\(\lambda-1)^2(\lambda +1)^2 \end{cases}$\\
\hline
XIII&2 & 2 & $\tiny\begin{pmatrix}
1 & 0\\ 0 & 1
\end{pmatrix}\otimes \tiny\begin{pmatrix}
0 & 1\\ d & 0
\end{pmatrix}=\tiny\begin{pmatrix}
0 & 0 & 0 & 1\\
0 & 0 & d & 0\\
0 & 1 & 0 & 0\\
d & 0 & 0 & 0
\end{pmatrix}$ &  $d\ne0$ & No &  12 & $\begin{cases}(\lambda-1)^2(\lambda +1)^2\\ (\lambda-1)(\lambda +1)\end{cases}$\\
\hline
\end{tabular}  
\caption{\footnotesize Tensorially decomposable $4-$dimensional evolution algebra types}
\label{tab:productos}
\end{center}
\end{table}

Observe that we have obtained several complete systems of invariants for all family types in Table \ref{tab:productos}. Then, a complete system of invariants consist on each of the sets composed by:
\begin{itemize}
    \item The numbers $z(M)$, $z_d(M)$ and the simplicity of the tensor product.
    \item The number $z(M)$ and the polynomial $p_c(M)$.
    \item The polynomials $p_c(M)$ and $p_m(M)$.
\end{itemize}

\begin{remark}\rm 
If $\{e_i\}$ and $\{f_j\}$ are natural bases of a perfect evolution algebra, then
we know that there is a permutation $\sigma$ and nonzero scalars $c_j$ such that $f_j=c_j e_{\sigma(j)}$ for any $j$. Assuming that $e_i^2=\sum_j \omega_{ji}e_j$ and $f_j^2=\sum_{k}\sigma_{kj}f_k$ (for $\omega_{ji},\sigma_{kj}\in\K$) we know that the relation between the structure constants is 
$$\sigma_{ij}=c_j^2c_i^{-1}\omega_{\sigma(i),\sigma(j)}.$$ 
In particular, if the structure constants satisfy $\omega_{ii}=\sigma_{ii}=1$ for any $i$, we have $c_i=1$ also for any $i$. Accordingly one basis is a re-ordering of the other. In terms of the basis change matrix, it reduces to a permutation matrix.
\end{remark}

Now we want to classify all the tensorially decomposable, $A_1\otimes A_2$, evolution algebras of dimension $4$ distinguishing two cases: $A_1\otimes A_2$ simple and $A_1\otimes A_2$ perfect (but not simple).

\begin{theorem}\label{clasifica}
Let $A_1$ and $A_2$ be $2$-dimensional perfect evolution algebras. We have:
\begin{itemize}
    \item If $A_1\otimes A_2$ is a $4-$dimensional  simple evolution algebra, then it turns out that they are one of the family types I-V, VII and XI.
    \item If $A_1\otimes A_2$ is a $4-$dimensional non simple evolution algebra, then it turns out that they are one of the family types VI, VIII-X, XII, XIII.
\end{itemize}
\end{theorem}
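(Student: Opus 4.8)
The plan is to observe that the theorem is essentially a bookkeeping consequence of the material already assembled, organized around two independent facts: first, that every $2$-dimensional perfect evolution algebra has structure matrix of one of the three normal forms listed before Table~\ref{tab:productos}, so that every tensorially decomposable $4$-dimensional perfect evolution algebra $A_1\otimes A_2$ has structure matrix conjugate (under the permutation-and-scaling action recalled in the remark on orbits) to one of the thirteen Kronecker products $M_1\otimes M_2$ of Table~\ref{tab:productos}; and second, that the simple/non-simple dichotomy has already been decided for each of these thirteen family types by the $\sigma_k$-criterion, with outcome recorded in the ``Simple'' column of the table. Thus the theorem is precisely the reading-off of that column, and the proof consists in verifying that the enumeration is exhaustive and that the simplicity computations are correct.

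First I would argue exhaustiveness: by Remark~\ref{nonperfect}, $M_1\otimes M_2$ is a perfect evolution algebra iff each $M_i$ is, and by \cite[Lemma 3.1]{squares} (as quoted) a $2$-dimensional perfect evolution algebra has, up to change of natural basis, one of the three displayed structure matrices, with $z(M_i)\in\{0,1,2\}$. Running over the (unordered) pairs of these three types, and using commutativity of $\otimes$ up to isomorphism together with formula \eqref{compruebese1} to separate the resulting $z(M)$-values, one lands exactly on the thirteen rows of Table~\ref{tab:productos}; the number-of-zeros data in Remark~\ref{remark:zeros} guarantees no two rows with the same $(z(M),z_d(M))$ are secretly the same algebra and that no pair has been omitted.

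Next I would justify the simplicity assignments. For an evolution algebra, simplicity is equivalent to the associated graph being strongly connected and having no proper absorbing structure; concretely one applies the stabilizing-index computation, i.e.\ one forms the Boolean matrix $A$ obtained from $M=M_1\otimes M_2$ by replacing nonzero entries by $1$ and checks whether $\sigma_k(A)$ eventually becomes the all-ones matrix (equivalently, invokes Theorem~\ref{thm:strong} with Proposition~\ref{prop:tensorialproduct}: $G_1\times G_2$ is connected iff the closed-path-length gcd's of $G_1$ and $G_2$ are coprime, and one then checks strong connectivity of $G_1\times G_2$ directly). Carrying this out for each family type yields that I–V, VII, XI are simple and VI, VIII–X, XII, XIII are not; this is exactly the content already asserted in the paragraph preceding Table~\ref{tab:productos} and in the table itself, so in the write-up I would simply cite those computations rather than redo all thirteen.

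The main obstacle is not conceptual but organizational: one must be careful that ``perfect evolution algebra structure matrix up to natural-basis change'' really is captured by the three normal forms for \emph{every} admissible parameter range (including the non-simple $2$-dimensional perfect algebras, e.g.\ $\operatorname{diag}(1,1)$ and the reducible ones, which also appear as factors in rows VIII, X, XIII), and that the action of $S_n\rtimes(\K^\times)^n$ does not accidentally identify two of the thirteen types. Using the invariants $z(M)$, $z_d(M)$, and the pair $(p_c(M),p_m(M))$ listed in the table, all thirteen are pairwise distinguished, which both closes this gap and yields the stated complete systems of invariants; so the proof reduces to: (i) enumerate, (ii) compute $\sigma_k$ (or apply Theorem~\ref{thm:strong}) on each, (iii) record the outcome.
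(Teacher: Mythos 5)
Your proposal takes essentially the same route as the paper: the theorem there is presented as a direct reading-off of Table~\ref{tab:productos}, whose thirteen rows arise from exactly the enumeration of $2$-dimensional normal forms you describe (organized by the zero-counts of Remark~\ref{remark:zeros}) and whose ``Simple'' column is determined by the same $\sigma_k$-criterion. Your added care about exhaustiveness and about distinguishing the thirteen types via the invariants $z(M)$, $z_d(M)$, $p_c(M)$, $p_m(M)$ only makes explicit what the paper leaves implicit.
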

In the case $A_1\otimes A_2$ is a non perfect evolution algebra then it must be that either $A_1$ or $A_2$ is a non perfect evolution algebra by Remark \ref{nonperfect}. 
The classification of  tensorially decomposable $4-$dimensional non perfect evolution algebras will be addressed in a forthcoming work. 
\begin{example}\rm
As an example of a classification task of evolution algebras assume the evolution algebra with structure matrix
$$M =\tiny \left(
\begin{array}{cccc}
 1 & 0 & 0 & 2 \\
 0 & 1 & 0 & 1 \\
 1 & 2 & 1 & 2 \\
 0 & 0 & 0 & 1 \\
\end{array}
\right).$$
Since the determinant of $M$ is nonzero, this matrix corresponds to a perfect evolution algebra $A$. We want to classify it. If we compute the stabilizing index of $M$ 
we see that it is $1$ since $\sigma_2(M)=\sigma_1(M)$.
However, $\sigma_2(M)$ is not the matrix whose entries are $1$, consequently $A$ is not a simple algebra.
Next, we can compute the characteristic polinomial of $$\tiny \left(
\begin{array}{cccc}
 1 & 0 & 0 & 1 \\
 0 & 1 & 0 & 1 \\
 1 & 1 & 1 & 1 \\
 0 & 0 & 0 & 1 \\
\end{array}
\right)$$
and we find $(\lambda-1)^4$ hence if $A$ is tensorially decomposable it should be an algebra of the family type VI or X. 
Now the minimal polinomial of the above matrix is $(\lambda-1)^3$ hence this determines that the only possible family type is VI.
So, in order to see if it is tensorially decomposable we compute the orbit of $M$ under the action of $S_4$ and check if there is some element $M'$ in that orbit, such that $\ex(M')=1$ (see Remark \ref{remark:extendida}).
Computing the orbit of $M$ we find $$M'=\tiny\left(
\begin{array}{cccc}
 1 & 0 & 0 & 0 \\
 2 & 1 & 0 & 0 \\
 1 & 0 & 1 & 0 \\
 2 & 1 & 2 & 1 \\
\end{array}
\right)$$ which satisfies $\ex(M')=1$ and in fact $M'=\tiny\begin{pmatrix}1 & 0\cr 1 & 1\end{pmatrix}\otimes 
\begin{pmatrix}1 & 0\cr 2 & 1\end{pmatrix}$.
\end{example}

\end{document}